\documentclass[A4, 11pt]{amsart}
\usepackage{amssymb, enumitem, color, tikz, hyperref, cleveref}
\usepackage[T1]{fontenc}
\usepackage{mathptmx}
\usepackage{todonotes, comment}
\usepackage{array}
\usepackage{multirow}
\usepackage{graphicx}

\newtheorem{theorem}{Theorem}
\numberwithin{theorem}{section}

\newtheorem{lemma}[theorem]{Lemma}
\newtheorem{corollary}[theorem]{Corollary}

\newtheorem{claim}[theorem]{Claim}
\theoremstyle{definition}
\newtheorem{definition}[theorem]{Definition}
\newtheorem{example}[theorem]{Example}
\newtheorem{remark}[theorem]{Remark}

\crefname{question}{question}{questions}

\DeclareMathOperator{\reg}{\operatorname{reg}}
\DeclareMathOperator{\codim}{\operatorname{codim}}
\DeclareMathOperator{\conn}{\operatorname{conn}}

\DeclareMathOperator{\PP}{\mathbb{P}}
\DeclareMathOperator{\ZZ}{\mathbb{Z}}
\DeclareMathOperator{\im}{\operatorname{im}}
\DeclareMathOperator{\rank}{\operatorname{rank}}

\title[Eisenbud--Goto for Stanley--Reisner ideals and simplicial complexes]{An Eisenbud--Goto type inequality for Stanley--Reisner ideals and simplicial complexes}
\author{Jaewoo Jung \and Jinha Kim \and Minki Kim \and Yeongrak Kim}

\address[Jaewoo Jung]{Center for Complex Geometry, Institute for Basic Science (IBS), Daejeon, South Korea}
\email{jaewoojung@ibs.re.kr}

\address[Jinha Kim]{Discrete Mathematics Group, Institute for Basic Science (IBS), Daejeon, South Korea}
\email{jinhakim@ibs.re.kr}

\address[Minki Kim]{Division of Liberal Arts and Sciences, Gwangju Institute of Science and Technology, Gwangju, South Korea}
\email{minkikim@gist.ac.kr}

\address[Yeongrak Kim]{Department of Mathematics, Pusan National University,  
Busan, South Korea
}

\address{Institute of Mathematical Science, Pusan National University, Busan, South Korea}
\email{yeongrak.kim@pusan.ac.kr}


\begin{document}

\begin{abstract}
    The Leray number of an abstract simplicial complex is the minimal integer $d$ where its induced subcomplexes have trivial homology groups in dimension $d$ or greater. We give an upper bound on the Leray number of a complex in terms of how the facets are attached to each other. We also describe the structure of complexes for the equality of the bound that we found.
    Through the Stanley--Reisner correspondence, our results give an Eisenbud--Goto type inequality for any square-free monomial ideals. This generalizes Terai's result.
\end{abstract}

\thanks{Corresponding author: Jinha Kim (\texttt{jinhakim@ibs.re.kr}).}

\subjclass[2020]{13F55, 55U10, 13D02}

\keywords{Eisenbud--Goto inequality, Castelnuovo--Mumford regularity, Stanley--Reisner ideals, Simplcial complexes, Leray numbers}

\maketitle

\section{Introduction}
Castelnuovo--Mumford regularity (simply called regularity) is one of the fundamental invariants in commutative algebra and algebraic geometry which measures the complexity of a graded module over a polynomial ring, or a coherent sheaf on a projective space. 
For a finitely generated graded module $M$ over a polynomial ring $S=k[x_1, \cdots, x_n]$ over an algebraically closed field $k$ of characteristic zero, we may read off the graded Betti numbers
$$ \beta_{i,i+j}(M) = \rank \operatorname{Tor}_i^S (M,k)_{i+j} $$
from the minimal free resolution of $M$.
Following an interpretation of Eisenbud and Goto \cite{MR741934}, the regularity of $M$ equals the height of the (Betti) table of graded Betti numbers.
Computing the regularity explicitly, or finding a nice upper bound in terms of algebraic and geometric invariants of $M$ is thus a fundamental question in commutative algebra and algebraic geometry. 
The most famous question, often called the regularity conjecture of Eisenbud and Goto \cite{MR741934}, asks whether
$$\reg(I_X) \le \deg (X) - \codim (X) +1 $$
where $I_X \subset S$ is the homogeneous prime ideal of a nondegenerate projective variety $X \subseteq \PP (S)$. 
McCullough and Peeva pointed out that the conjecture is not true in general, however, there are several important cases that fit into the conjecture. 
Indeed, the conjecture is still mysterious for smooth (or mildly singular) projective varieties. 
It seems to be that finding an upper bound for the regularity of the form $\deg - \codim + 1 + (\text{some constants})$ is also an interesting question \cite{MR2601633}. 
We refer to \cite{MR3758150} and references therein for more details on this story.

Given a variety and its defining ideal, one can naturally obtain a monomial ideal that preserves many algebraic invariants by taking its \textit{initial ideal} (with respect to any monomial order).
Moreover, thanks to Bayer and Stillman \cite{MR894583}, the regularity of an ideal $I \subset S$ equals the regularity of the \textit{generic initial ideal} of $I$ with respect to the reverse lexicographic order.
Together with the notion of polarization of a monomial ideal, the regularity of an ideal can be computed from the regularity of the square-free monomial ideal in $S$.

Elegant works by Stanley, Hochster, and Reisner in the 1970s bring us to the realm of combinatorics, especially in terms of combinatorial/homological data of the corresponding simplicial complex.
In detail, there is a one-to-one correspondence between square-free monomial ideals and simplicial complexes by associating monomial generators of the ideal with non-faces of the complex.
It is so-called the Stanley--Reisner correspondence.
Through this correspondence, we can read some numerical information about the ideal such as degree and codimension from the associated simplicial complex. 
Moreover, the graded Betti numbers of the monomial ideal can be computed combinatorially due to Hochster's formula.
Based on this correspondence, there are active studies about monomial ideals regarding the associated complexes, and the monomial ideals are called (non-)edge ideals in the literature.
(See \cite{MR3070118, MR3213523, MR2563591} for examples and \cite{MR2932582} for a survey.)

On the other hand, the regularity of square-free monomial ideals was independently studied as a homological dimension, called the ``Leray number'', of corresponding simplicial complexes. 
A simplicial complex $X$ is said to be {\em $d$-Leray} if for every $i \geq d$, the $i$-th homology group of any induced subcomplex of $X$ is trivial.
The {\em Leray number}, denoted by $L(X)$, is the smallest integer $d$ where $X$ is $d$-Leray.
See Section~\ref{subsec:complex} for terminologies about simplicial complexes.
The concept of the Leray number was introduced by \cite{Weg75} in the study of understanding the intersection patterns of convex sets based on Helly type theorems, and has been deeply studied and applied in various directions.
It follows from Hochster's formula that the Leray number of a simplicial complex and the regularity of the corresponding square-free monomial ideal are equivalent.
We refer to \cite{KM06} for more details about the correspondence.
See also \cite{Tancer} for an overview of Helly type theorems and $d$-Leray complexes.

Note that, since the Stanley--Reisner ideal $I_{X}$ corresponding to a simplicial complex $X$ needs not to be a prime ideal, there is no reason that the regularity of $I_X$ satisfies the Eisenbud--Goto inequality.
On the other hand, Terai showed that the Eisenbud--Goto inequality for $\reg (I_X)$ is valid under the assumption that the simplicial complex $X$ is pure and strongly connected in \cite{MR1838920}. 
Motivated by his work, we propose a question on an upper bound of the regularity of a Stanley--Reisner ideal (or equivalently, the Leray number of a simplicial complex) for general situations. 

In this paper, we introduce a new function $M(X)$ on a simplicial complex $X$ that provides an upper bound for the Leray number $L(X)$.
It is defined inductively on the number of facets and depending on how the facets of the complex are attached to each other.
See \Cref{thm:main} for details.
A key observation is that the Leray number of a simplicial complex increases by at most one whenever we attach a new simplex.

In addition, we explore some classes of simplicial complexes that satisfy the equality established in our main result.
More explicitly, in Theorem~\ref{thm:eqaulity}, we investigate \textit{weak shellable complexes} that satisfy the equality stated in \Cref{thm:main}.
As an application, in \Cref{cor:weakEG}, we discover an analogue of Terai's result by giving a \textit{weak Eisenbud--Goto inequality} for all simplicial complexes. 
Also, \Cref{cor:Weakeular} gives a bound on $M(X)$ in terms of the \textit{weighted Euler characteristic} of the weighted graph associated with $X$.

The structure of the paper is as follows. In Section~\ref{sec:prem}, we list preliminaries and backgrounds. We state and prove our main results in Section~\ref{Sec:main} and \ref{sec:main2}. In Section~\ref{sec:rmk}, we discuss some applications and remarks in the view of algebraic geometry and commutative algebra.

\section{preliminaries}\label{sec:prem}
We review some basic definitions and notions first.
For details, please consult some textbooks such as \cite{MR2724673}, \cite{MR2110098}, or \cite{MR1453579}.
\subsection{(abstract) simplicial complexes}\label{subsec:complex}
A \textit{simplicial complex} $X$ on $V$ is a collection of subsets of $V$ that is closed under taking subsets, that is, if $F \in X$ and $G \subset F$ then $G \in X$.
$V$ is called the {\em vertex set} of $X$, and the elements of $X$ are called the \textit{faces}.
The faces that are maximal with respect to inclusion are called the \textit{facets}.
We say $X$ is a \textit{simplex} on $V$ if $X=2^V$ (including the case when $V=\emptyset$).
If $X$ does not have any face, that is, if $X$ is indeed an empty set, then we often call it a {\em void complex}.

For a subset $W$ of $V$, the \textit{induced subcomplex} of $X$ on $W$, denoted by $X[W]$, is the simplicial complex on $W$ whose faces are the faces of $X$ that are contained in $W$.
For a face $F$ of $X$, we define the \textit{dimension} of $F$ by $\dim F :=|F|-1$ and the \textit{dimension} of $X$ by $\displaystyle{\dim X = \max_{F\in X} \dim F}$.
Let $f_i$ be the number of $i$-dimensional faces of $X$ for $i\ge -1$.
When $\dim X=d-1$, $f_0$ denotes the number of vertices and $f_{d-1}$ denotes the number of the facets with the maximum dimension of the complex.

For a vertex subset $\sigma$, we denote the simplex on $\sigma$ by $\Delta(\sigma)$. 
For a nonnegative integer $n$, we denote by $\Delta(n)$ a simplex on $n$ vertices.
Let $[n]:=\{1,\dots,n\}$ 
and denote by $\partial\Delta(n)$ the boundary of $\Delta(n)$, that is, if $\Delta(n)$ is a simplex on $[n]$ then $\partial\Delta(n)$ equals $\Delta(n)\setminus\{[n]\}$.

A simplicial complex $X$ is called \textit{pure} if all its facets have the same dimension.
When $X$ is pure and all facets have the dimension $d$, it is said to be \textit{strongly connected} if, for any two facets $\tau_1$ and $\tau_2$ of $X$, there exists a sequence of facets $(\tau_1=\sigma_1,\sigma_2,\dots,\sigma_{\ell}=\tau_2)$ for some $\ell \ge 1$ such that $\sigma_i \cap \sigma_{i+1}$ is a face of dimension $d-1$ for each $i=1,\dots,\ell-1$.

\subsection{Homology groups (over $\ZZ/2\ZZ$) of simplicial complexes}
Let $X$ be a non-void simplicial complex and $n \ge 0$ be an integer.
We define the \textit{$n$-th homology group} of $X$ as follows.
Here, we only consider homology groups with coefficients in $\mathbb{Z}_2 \cong \ZZ/2\ZZ$.

Let $C_n(X)$ be the vector space over $\mathbb{Z}_2$ where its basis is the $n$-dimensional faces of $X$. That is,
\[C_n(X)=\{\sum_{i=1}^k c_i \sigma_i : c_i \in \mathbb{Z}_2, \sigma_i \text{ is an $n$-dimensonal face of }X\}.\]
We define $C_{-1}(X)=\mathbb{Z}_2$.
Each element of $C_n(X)$ is called an \textit{$n$-chain} of $X$.

For an integer $n \ge 1$, the \textit{$n$-th boundary map} $\partial_n$ is the linear map from $C_n(X)$ to $C_{n-1}(X)$ where $\partial_n(\sigma)=\sum_{i=1}^{n+1}(\sigma\setminus\{v_i\})$ for an $n$-dimensional face $\sigma=\{v_1,\dots,v_{n+1}\}$ of $X$.
For $n=0$, $\partial_0: C_0(X) \to C_{-1}(X)$ is defined as the linear map where $\partial_0(v)=1$ for each vertex $v$ of $X$.

Now we have a sequence of linear maps of vector spaces over $\mathbb{Z}_2$
\[\cdots \to C_{n+1}(X) \xrightarrow{\partial_{n+1}} C_n(X) \xrightarrow{\partial_n} C_{n-1}(X) \to \cdots \to C_0(X) \xrightarrow{\partial_{0}} C_{-1}(X) \to 0\]
with $\partial_n \partial_{n+1}=0$ for each $n$.
This sequence is called the \textit{chain complex} of $X$.
Since $\partial_{n} \partial_{n+1}=0$, we have $\im \partial_{n+1} \subset \ker \partial_{n}$.
Then the \textit{$n$-th (reduced) homology group} of $X$ is defined as $\tilde{H}_n(X)=\ker \partial_n / \im \partial_{n+1}$.
Each element of $\ker\partial_n$ is called an \textit{$n$-cycle} of $X$ and each element of $\im \partial_{n+1}$ is called an \textit{$n$-boundary} of $X$.
For an $n$-cycle $\alpha$, we have $\alpha+\im\partial_{n+1} \in \tilde{H}_n(X)$ and denote it by $[\alpha]$ for convenience.
The \textit{$n$-th (reduced) Betti number} of $X$ is $\tilde{\beta}_n(X)=\dim \tilde{H}_n(X)$.
Recall that a simplicial complex $X$ is \textit{$d$-Leray} if $\tilde{H}_i(X[W])=0$ for all $i \ge d$ and all $W \subset V$. 

The Mayer--Vietoris exact sequence is one of the most fundamental tools that help us understand the homology groups. 
\begin{theorem}\label{thm:mvseq}
    For simplicial complexes $A$ and $B$, we have the following exact sequence:
        \begin{align*}
        \begin{split}
        & \cdots \longrightarrow \tilde{H}_n(A \cap B) \quad  \xrightarrow{(i_n^*, j_n^*)} \quad  \tilde{H}_n(A) \oplus \tilde{H}_n(B) ~ \longrightarrow ~\tilde{H}_n(A \cup B)\\
        & \xrightarrow{~ \delta_n ~} \tilde{H}_{n-1}(A \cap B) \xrightarrow{(i_{n-1}^*,j_{n-1}^*)} \tilde{H}_{n-1}(A) \oplus \tilde{H}_{n-1}(B) \longrightarrow \tilde{H}_{n-1}(A \cup B) \longrightarrow \cdots.
        \end{split}
        \end{align*}
        Here, $i_n^*:\tilde{H}_n(A \cap B) \to \tilde{H}_n(A)$ and $j_n^*:\tilde{H}_n(A \cap B) \to \tilde{H}_n(B)$ are the induced maps from the inclusion maps $i: A\cap B \hookrightarrow A$, $j: A\cap B \hookrightarrow B$, respectively.
        Furthermore, the map $\delta_n$ is defined as follows:
        For each $n$-cycle $\alpha$ of $X$, if $\alpha=\alpha_1+\alpha_2$ for an $n$-cycle $\alpha_1$ of $A$ and an $n$-cycle $\alpha_2$ of $B$, then $\delta_n([\alpha])=[\partial_n(\alpha_1)]=[\partial_n(\alpha_2)]$.
\end{theorem}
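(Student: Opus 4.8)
The plan is to realize this sequence as the long exact homology sequence associated to a short exact sequence of chain complexes; in the simplicial setting this is completely elementary, requiring none of the subdivision arguments needed for singular homology of topological spaces. I would assume throughout that $A$, $B$, and $A\cap B$ are non-void, so that $C_{-1}(A)=C_{-1}(B)=C_{-1}(A\cap B)=\ZZ/2\ZZ$; this is the implicit hypothesis of the statement, and the degenerate cases are checked by hand.

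View $C_n(A\cap B)$, $C_n(A)$, $C_n(B)$ as subspaces of $C_n(A\cup B)$, each spanned by the $n$-faces it contains. The two combinatorial facts driving everything are
\[ C_n(A\cup B)=C_n(A)+C_n(B)\qquad\text{and}\qquad C_n(A)\cap C_n(B)=C_n(A\cap B), \]
both immediate from the definitions of union and intersection of complexes (an $n$-face of $A\cup B$ lies in $A$ or in $B$, and one lying in both lies in $A\cap B$). Define $\phi_n\colon C_n(A\cap B)\to C_n(A)\oplus C_n(B)$ by $\phi_n(c)=(c,c)$ and $\psi_n\colon C_n(A)\oplus C_n(B)\to C_n(A\cup B)$ by $\psi_n(a,b)=a+b$. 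Then $\phi_n$ is injective, $\psi_n$ is surjective by the first identity, $\psi_n\circ\phi_n=0$, and if $a+b=0$ in $C_n(A\cup B)$ then $a=b$ as chains, hence $a\in C_n(A)\cap C_n(B)=C_n(A\cap B)$ by the second identity; thus $\ker\psi_n=\im\phi_n$. Since the boundary map of each of the four complexes is the restriction of the single formula $\sigma\mapsto\sum_i(\sigma\setminus\{v_i\})$, both $\phi$ and $\psi$ commute with boundaries, so we obtain a short exact sequence of chain complexes of $\ZZ/2\ZZ$-vector spaces
\[ 0\longrightarrow C_\bullet(A\cap B)\xrightarrow{\ \phi\ }C_\bullet(A)\oplus C_\bullet(B)\xrightarrow{\ \psi\ }C_\bullet(A\cup B)\longrightarrow 0. \]

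Finally I would invoke the standard zig-zag lemma (a routine diagram chase) to obtain the long exact sequence in homology, and then identify the maps: $\phi_n^*=(i_n^*,j_n^*)$ since $\phi_n$ is built from the two inclusions, and the next map is the sum of the maps induced by the inclusions into $A\cup B$ (no signs appear over $\ZZ/2\ZZ$). For the connecting map $\delta_n$, given a cycle $\alpha$ of $A\cup B$, surjectivity of $\psi_n$ gives a decomposition $\alpha=\alpha_1+\alpha_2$ with $\alpha_1\in C_n(A)$ and $\alpha_2\in C_n(B)$; then $\partial_n\alpha=0$ forces $\partial_n\alpha_1=\partial_n\alpha_2$, this common chain lies in $C_{n-1}(A\cap B)$ by the second identity and is a cycle there, and by the construction of the connecting homomorphism $\delta_n([\alpha])=[\partial_n\alpha_1]=[\partial_n\alpha_2]$ — exactly the formula in the statement (read with $\alpha_1,\alpha_2$ arbitrary chains rather than cycles). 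No real obstacle arises, since the result is classical; the only care needed is the bookkeeping at the bottom of the sequence — exactness in degree $-1$ is precisely why $A\cap B$ must be non-void, so that $\im\phi_{-1}=\{(0,0),(1,1)\}=\ker\psi_{-1}$ — together with citing or reproving the zig-zag lemma for chain complexes of $\ZZ/2\ZZ$-vector spaces. Everything else is formal.
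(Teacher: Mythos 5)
Your proof is correct and follows the standard approach: exhibit the Mayer--Vietoris sequence as the long exact homology sequence of the short exact sequence of chain complexes $0\to C_\bullet(A\cap B)\to C_\bullet(A)\oplus C_\bullet(B)\to C_\bullet(A\cup B)\to 0$, using the two combinatorial identities $C_n(A\cup B)=C_n(A)+C_n(B)$ and $C_n(A)\cap C_n(B)=C_n(A\cap B)$, then apply the zig-zag lemma. The paper itself gives no proof here (it simply states the Mayer--Vietoris sequence as a classical tool), so there is nothing to compare against; your argument is a perfectly adequate filling-in. Two small points worth preserving in any writeup: (a) your observation that the paper's description of $\delta_n$ should read ``$n$-chain'' rather than ``$n$-cycle'' for $\alpha_1$ and $\alpha_2$ is a genuine catch --- as stated verbatim it would force $\delta_n=0$; and (b) your remark that exactness in degree $-1$ for the \emph{reduced} chain complex hinges on $A\cap B$ being non-void is precisely the care one must take when citing the result in this reduced-homology, $\mathbb{Z}/2\mathbb{Z}$ setting (and is harmless in the paper's applications, where all three complexes are taken on the same ambient vertex set and the intersection contains the empty face).
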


\subsection{Stanley--Reisner correspondence}
Let $I$ be an ideal generated by square-free monomials in the polynomial ring $S = k[x_1,\dots,x_n]$ over a field $k$. We can associate with $I$ a simplicial complex, denoted by $X_I$, whose faces correspond to monomials that are not elements of $I$. That is, $X_I$ consists of all subsets $F\subseteq [n]$ such that $\prod_{i\in F} x_i \notin I$. The simplicial complex $X_I$ is called the \textit{Stanley--Reisner complex} of $I$.

Conversely, given a simplicial complex $X \subseteq 2^{[n]}$, we can associate with it a square-free monomial ideal $I_X$, generated by the non-faces of the complex $X$. That is, $I_X$ is the ideal in $S$ generated by all monomials $\prod_{i\in F} x_i$, where $F\notin X$. The monomial ideal $I_X$ is called the \textit{Stanley--Reisner ideal} of $X$.
This correspondence between square-free monomial ideals and simplicial complexes is known as the Stanley--Reisner correspondence.

One important aspect of the Stanley--Reisner correspondence is that the graded Betti numbers of the Stanley--Reisner ideal can be computed as the homologies of induced subcomplexes of the corresponding simplicial complex.
In this paper, we introduce a homological version of Hochster's formula.
\begin{theorem}\cite{MR0441987}\label{thm:hochster}
    Let $I$ be a Stanley--Reisner ideal in the polynomial ring $S = k[x_1,\dots,x_n]$, where $k$ is a field of characteristic zero. 
    Let $X_I$ be the Stanley--Reisner complex of $I$.
    Then, for $j\ge 2$, $$\beta_{i,i+j}(I) = \sum_{W\subset V} \dim_k \tilde{H}_{j-2}(X_I[W])$$
    where $W$ runs over $V$ with $|W|=i+j$.
\end{theorem}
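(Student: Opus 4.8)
The statement is the classical Hochster formula (in its ideal-theoretic form), so the plan is to run the standard $\mathbb{Z}^n$-graded Koszul computation, taking a little care to pass from the quotient ring $S/I$ to the ideal $I$ itself. First I would reduce to $S/I$: applying $\operatorname{Tor}^S_\bullet(-,k)$ to the short exact sequence $0\to I\to S\to S/I\to 0$ and using that $S$ is $S$-free, the long exact sequence gives $\operatorname{Tor}^S_i(I,k)\cong\operatorname{Tor}^S_{i+1}(S/I,k)$ for all $i\ge 1$; for $i=0$ the same isomorphism holds once one notes that $\operatorname{Tor}_0(S,k)\to\operatorname{Tor}_0(S/I,k)$ is an isomorphism for degree reasons (both are $k$ in degree $0$), which forces $\operatorname{Tor}_0(I,k)\cong\operatorname{Tor}_1(S/I,k)$. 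Hence $\beta_{i,i+j}(I)=\beta_{i+1,\,i+j}(S/I)$, and it suffices to prove
\[\beta_{p,q}(S/I)=\sum_{|W|=q}\dim_k\tilde H_{q-p-1}(X_I[W])\]
and substitute $p=i+1$, $q=i+j$; here $q-p-1=j-2\ge 0$ is exactly the content of the hypothesis $j\ge 2$, so no degenerate $\tilde H_{-1}$ terms appear.

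Next I would compute $\operatorname{Tor}^S_\bullet(S/I,k)$ from the Koszul complex $K_\bullet=K_\bullet(x_1,\dots,x_n)\otimes_S S/I$, which carries a $\mathbb{Z}^n$-grading refining the standard one: $K_p=\bigoplus_{|F|=p}(S/I)\,e_F$ with $\deg e_F=\mathbf 1_F$ and $\partial(e_F)=\sum_{i\in F}\pm\,x_i\,e_{F\setminus i}$. The key input I would invoke is the standard fact that, because $I$ is square-free, $\operatorname{Tor}^S_p(S/I,k)_{\mathbf a}=0$ unless $\mathbf a=\mathbf 1_W$ for some $W\subseteq[n]$ — this can be read off $K_\bullet$ directly by checking that the strand in any non-square-free multidegree is acyclic. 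Granting this, $\beta_{p,q}(S/I)=\sum_{|W|=q}\dim_k H_p\big((K_\bullet)_{\mathbf 1_W}\big)$.

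Finally, for a fixed square-free degree $\mathbf 1_W$ I would identify the strand $(K_\bullet)_{\mathbf 1_W}$ with the reduced (co)chain complex of the induced subcomplex $X_I[W]$. In multidegree $\mathbf 1_W$ the summand $(S/I)_{\mathbf 1_W-\mathbf 1_F}\,e_F$ is nonzero precisely when $F\subseteq W$ and $W\setminus F$ is a face of $X_I[W]$, in which case it is one-dimensional; writing $G=W\setminus F$ sets up a bijection between a basis of $(K_p)_{\mathbf 1_W}$ and the $(|W|-p-1)$-dimensional faces of $X_I[W]$, so $(K_p)_{\mathbf 1_W}\cong\tilde C_{|W|-p-1}(X_I[W])$. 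Under this relabelling the Koszul differential becomes, up to signs, the simplicial coboundary operator raising dimension by one, so $H_p\big((K_\bullet)_{\mathbf 1_W}\big)\cong\tilde H^{\,|W|-p-1}(X_I[W])$; since we work over a field, $\dim_k\tilde H^{m}(X_I[W])=\dim_k\tilde H_m(X_I[W])$. Summing over $|W|=q$ and substituting $p=i+1$, $q=|W|=i+j$ gives the stated identity. The only steps that need genuine care are the square-free concentration of $\operatorname{Tor}$ and pinning down the signs and index bookkeeping so that the Koszul differential really matches the (co)boundary map; both are routine but easy to get slightly wrong.
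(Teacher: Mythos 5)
The paper does not actually prove this statement; it is Hochster's formula and is cited directly from \cite{MR0441987}, with only a remark about the characteristic-zero hypothesis following the statement. So there is no in-paper argument to compare against. Your proposal is the standard derivation of Hochster's formula and, up to the sign bookkeeping you yourself flag as routine, it is correct: the reduction $\beta_{i,i+j}(I)=\beta_{i+1,i+j}(S/I)$ via the long exact sequence of $\operatorname{Tor}$ is sound (and even easier than you state in the degrees $i+j\ge 2$ that matter, since there $\operatorname{Tor}_0(S,k)$ already vanishes); the concentration of $\operatorname{Tor}^S_\bullet(S/I,k)$ in square-free multidegrees is genuine content but follows from the standard contracting homotopy built from the isomorphisms $x_i\colon (S/I)_{\mathbf a}\to(S/I)_{\mathbf a+\mathbf e_i}$ valid when $a_i\ge 1$; and the identification of the degree-$\mathbf 1_W$ Koszul strand with the augmented cochain complex of $X_I[W]$, with the dimension shift $|W|-p-1$, is exactly right and yields $\tilde H^{\,j-2}$ after substituting $p=i+1$, $|W|=i+j$. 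One point worth noting: nothing in your argument (or in Hochster's formula itself) requires characteristic zero — the formula holds over any field $k$, with homology taken with $k$-coefficients. The paper's characteristic-zero hypothesis is there because it elsewhere fixes $\mathbb Z/2\mathbb Z$ coefficients for simplicial homology and wants to compare ranks across coefficient fields; your proof quietly works with $k$-coefficient (co)homology throughout, which is the cleaner formulation and makes the $\dim\tilde H^m=\dim\tilde H_m$ step a one-line universal-coefficients observation rather than a delicate point.
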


We must work over a field of characteristic zero in this version of the formula because it is a necessary condition for identifying the rank of cohomologies of the upper Koszul complex with the dimension of homologies of the Stanley--Reisner complex. It is discussed in \cite[Remark 7.15]{MR3213521}.
By identifying the complex and the monomial ideal, we may abuse the notation: $\beta_{i,i+j}(I)=\beta_{i,i+j}(X_I)$.

Therefore, regarding the definition of topological Betti numbers, one can view the topological Betti numbers $\tilde{\beta}_{j-2}(X)$ as the graded Betti numbers $\beta_{i,i+j}(X)$ where $i+j = |V|$, for the vertex set $V$ of $X$.

Meanwhile, the algebraic sets defined by the Stanley--Reisner ideals correspond to the union of incomparable linear subspaces of a projective space. This union of linear subspaces in a projective space is known as a (coordinate) linear subspace arrangement in the projective space. 
Let $Y$ be a linear subspace arrangement in $\PP^N$, i.e., $Y= \bigcup_{i=1}^{m} L_i$ where $L_i$ are incomparable linear subspaces in $\PP^N$. 
In case the base field of the linear subspaces is an infinite field due to Hilbert's Nullstellensatz, there is a one-to-one correspondence between the linear subspaces $L_1,\ldots,L_m$ in the linear subspace arrangement and the facets $\sigma_1,\ldots,\sigma_m$ of the Stanley--Reisner complex $X_I$.
Note that this correspondence would not be guaranteed if the base field is a finite field. (See \cite[Remark 1.9]{MR2110098}.)

\subsection{Some correspondences between invariants}
Through the Stanley--Reisner correspondence, there are some correspondences between algebraic invariants on monomial ideals and topological invariants on simplicial complexes.

Let $I_X$ be the Stanley--Reisner ideal of a simplicial complex $X$ on $n$ vertices with $\dim X = d - 1$.
Using the Stanley--Reisner correspondence, we can relate algebraic invariants on $I_X$ to topological invariants on $X$. 
In particular, we can employ the correspondence between the $h$-vector and the $f$-vector to establish that the degree of the face ring $k[X]$ equals $f_{d-1}(X)$.
In addition, the (Krull) dimension of the face ring $k[X]$ equals $\dim X=d-1$. Hence, we have $\codim(k[X])=n-d$.

The Castelnuovo--Mumford regularity is an algebraic invariant that captures the complexity of homogeneous ideals. 
It measures the highest degree of entries in the differentials of the graded minimal free resolution of an ideal $I$. 
\begin{definition}[cf. {\cite[p. 48]{MR2724673}} or {\cite[Definition 5.54]{MR2110098}}]
$$\reg(I) = \max \{j: \beta_{i,i+j}(I)\not= 0 \text{ for some } i\}$$
\end{definition}
In other words, the regularity of $I$ is the \textit{height} of the Betti diagram, which records the graded components of the minimal free resolution of $I$.

If an ideal $I$ is a Stanley-Reisner ideal, the regularity of $I$ can be obtained from the vanishing homological degree of the induced subcomplexes of the Stanley-Reisner complex $X_I$ using \Cref{thm:hochster}.
More explicitly, we have $$\reg(I)=\max\{j:\tilde{H}_{j-2}(X_I[W]) \not= 0\text{ for some }W\subseteq V(X_I)\},$$ which implies that $L(X_I) = \reg(I) - 1$.

\begin{example}\label{eg:SRcorr}
    Let $I = (x_1x_2x_3,\, x_3x_4,\, x_2x_5,\, x_1x_4x_5,\, x_1x_6 ,\, x_2x_4x_6,\, x_3x_5x_6)$ in a polynomial ring $k[x_1,\dots,x_6]$.
    The Stanley--Reisner complex $X$ of $I$ is $\Delta(\{1,2,4\}) \cup \Delta(\{1,3,5\}) \cup \Delta(\{2,3,6\}) \cup \Delta(\{4,5,6\})$, and the coordinate linear subspace arrangement defined by $I$ is the union of $4$ planes in $\PP^5$.

    \begin{figure}[h!]
        \centering
        \begin{tabular}{r c r c c}
        & & & & \multirow{6}{10em}{\includegraphics[scale=0.5]{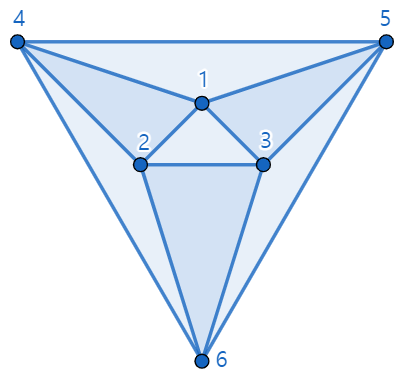}} \tabularnewline
        $I=(x_1x_2x_3,\; x_3x_4,$ & $\leftrightarrow$ & $X=\Delta(\{1,2,4\})$ & & \tabularnewline
        $x_2x_5,\; x_1x_4x_5,$  & & $\cup \Delta(\{1,3,5\})$ & & \tabularnewline
        $x_1x_6,\; x_2x_4x_6,$ & & $\cup \Delta(\{2,3,6\})$ & &  \tabularnewline
        $x_3x_5x_6)$ & & $\cup \Delta(\{4,5,6\})$ & & \tabularnewline
        & & & & \tabularnewline
        & & & & \tabularnewline
        \end{tabular}
        \caption{The Stanley--Reisner correspondence}
        \label{fig:Invcorrespondences}
    \end{figure}
    Since the simplicial complex $X$ consists of $4$ facets of dimension $2$, the face ring $k[X]$ has degree $4$, and the codimension of $k[X]$ is $3$.
    The regularity of $I$ is $3$, and the Leray number of $X$ is $2$.
\end{example}

\section{Bounds on Leray numbers of simplicial complexes}\label{Sec:main}

Let $X$ be a simplicial complex with $m$ facets.
Take a linear order $\prec:\sigma_1 \prec \cdots \prec\sigma_m$ of the facets of $X$.
For $j \in [m]$, let $X_j=\bigcup_{i=1}^{j} \Delta(\sigma_i)$.
We define $M_{\prec}(X)$ inductively as follows:
$M_{\prec}(X_1) = 1$ and  for $j = 2,\dots,m$,
\[M_{\prec}(X_j) = \begin{cases}
M_{\prec}(X_{j-1}) & \text{if } X_{j-1}\cap \Delta(\sigma_j) \text{ is a simplex,}\\
M_{\prec}(X_{j-1})+1 & \text{otherwise.}
\end{cases}\]
We define $M(X)=min\{M_{\prec}(X) :\; \prec \text{ is a linear order of the facets of }X\}$.

Now we prove that $M(X)$ is an upper bound for the Leray number $L(X)$ of $X$.
Our proof is based on the induction on the number of the facets of $X$ and the following lemma will be used for the inductive step.

\begin{lemma}\label{lem:inductivestep}
    Let $X$ be a simplicial complex on $V$ and let $\prec: \sigma_1 \prec \cdots \prec \sigma_m$ be a linear ordering of the facets of $X$ such that $M(X)=M_{\prec}(X)$. Let $X' =\cup_{i=1}^{m-1}\Delta(\sigma_i)$ and $\prec'$ be the induced order of $\prec$ on $\{\sigma_1,\ldots,\sigma_{m-1}\}$, which is a linear order of the facets of $X'$.
    For a subset $W$ of $V$, we have the following long exact sequence:
    \begin{align}\label{mvseq}
    \begin{split}
    & \cdots \to \tilde{H}_i(X'[W]\cap \Delta(\sigma_m)[W]) \xrightarrow{\iota_i^*} \tilde{H}_i(X'[W]) \to \tilde{H}_i(X[W])\\
    & \xrightarrow{\delta_i}  \tilde{H}_{i-1}(X'[W]\cap \Delta(\sigma_m)[W]) \to \tilde{H}_{i-1}(X'[W]) \to \tilde{H}_{i-1}(X[W]) \to \cdots.
    \end{split}
    \end{align}
    Furthermore, the following hold.
    \begin{enumerate}
        \item[(i)] If $X' \cap \Delta(\sigma_m)$ is a simplex, then 
        \[L(X)=L(X') \text{ and } M(X)=M_{\prec}(X)=M_{\prec'}(X')\ge M(X').\]
        \item[(ii)] If $X' \cap \Delta(\sigma_m)$ is not a simplex, then 
        \[L(X) \leq L(X')+1 \text{ and } M(X)=M_{\prec}(X)=M_{\prec'}(X')+1 \ge M(X')+1.\]
    \end{enumerate}
\end{lemma}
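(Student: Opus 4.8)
The plan is to derive the long exact sequence \eqref{mvseq} directly from the Mayer--Vietoris sequence (\Cref{thm:mvseq}) applied to the pair $A = X'[W]$ and $B = \Delta(\sigma_m)[W]$. One first checks that $A \cup B = X[W]$, which holds because $X = X' \cup \Delta(\sigma_m)$ and taking induced subcomplexes on $W$ commutes with unions. Then the only subtlety is that the Mayer--Vietoris sequence in \Cref{thm:mvseq} is stated for non-void complexes; if $A$, $B$, or $A\cap B$ is void on $W$ the sequence degenerates, and one should note that $\Delta(\sigma_m)[W]$ is always non-void (it contains $\emptyset$, as does $X'[W]$), while $A\cap B = X'[W] \cap \Delta(\sigma_m)[W]$ is non-void whenever $X'[W]$ and $\Delta(\sigma_m)[W]$ are, so in fact no degeneracy occurs once $W \neq \emptyset$; the $W = \emptyset$ case is trivial since all reduced homology vanishes. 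This gives \eqref{mvseq} with $\iota_i^* = (i_i^*, j_i^*)$ the induced inclusion maps (noting $\tilde H_i(\Delta(\sigma_m)[W]) = 0$ for all $i$ since $\Delta(\sigma_m)[W]$ is a simplex, hence contractible, which collapses the middle term of Mayer--Vietoris to just $\tilde H_i(X'[W])$).

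Next I would establish the statements about $M_{\prec}$. Since $M(X) = M_{\prec}(X)$ by hypothesis and $X_{m-1} = X'$, $X_m = X$ in the notation defining $M_{\prec}$, the recursive definition immediately gives: in case (i), $X' \cap \Delta(\sigma_m)$ a simplex, $M_{\prec}(X) = M_{\prec}(X')$; and in case (ii), $M_{\prec}(X) = M_{\prec}(X') + 1$. Here $M_{\prec}(X')$ means $M_{\prec'}(X')$ where $\prec'$ is the restriction of $\prec$, and $M_{\prec'}(X') \ge M(X')$ by definition of $M(X')$ as a minimum over all linear orders. So the $M$-inequalities in (i) and (ii) are essentially bookkeeping from the definition.

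The Leray number inequalities follow from the long exact sequence by chasing. For case (ii): for any $W \subseteq V$ and any $i \ge L(X') + 1$, we have $i \ge L(X') + 1 \ge 1$ so $i - 1 \ge L(X')$, hence $\tilde H_i(X'[W]) = 0$ and $\tilde H_{i-1}(X'[W] \cap \Delta(\sigma_m)[W]) = 0$ (the intersection is an induced subcomplex of $X'$, as $X'[W] \cap \Delta(\sigma_m)[W] = X'[W \cap \sigma_m]$, so its homology also vanishes above $L(X')$). The segment $\tilde H_i(X'[W]) \to \tilde H_i(X[W]) \to \tilde H_{i-1}(X'[W]\cap\Delta(\sigma_m)[W])$ then forces $\tilde H_i(X[W]) = 0$, giving $L(X) \le L(X') + 1$. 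For case (i), $X' \cap \Delta(\sigma_m)$ is a simplex; one shows $L(X) = L(X')$ by two inclusions. The inequality $L(X) \le L(X')$ follows as in case (ii) but now using that $X'[W] \cap \Delta(\sigma_m)[W]$ being an induced subcomplex of a simplex is itself a simplex (or void), hence has vanishing reduced homology in all degrees, so the exact sequence gives $\tilde H_i(X'[W]) \twoheadrightarrow \tilde H_i(X[W])$ and in fact an isomorphism in degrees $i \ge 1$, forcing $\tilde H_i(X[W]) = 0$ for $i \ge L(X')$. The reverse inequality $L(X') \le L(X)$: since $X'$ is obtained from $X$ by deleting the facet $\sigma_m$, one must argue that no homology is created by this deletion; concretely, for $W \subseteq V$ with $i \ge L(X)$, use the exact sequence again together with the fact that $\tilde H_i(X[W]) = 0$ and the connecting/inclusion maps to conclude $\tilde H_i(X'[W]) = 0$ — this needs the isomorphism $\tilde H_i(X'[W]) \cong \tilde H_i(X[W])$ for $i \ge 1$ coming from the vanishing of the simplex-intersection term in two consecutive degrees.

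The main obstacle I anticipate is not the homological algebra but handling the degenerate and low-dimensional cases cleanly: ensuring the Mayer--Vietoris sequence is legitimately applicable (non-voidness of the pieces), being careful that $\tilde H_{-1}$ behaves correctly when $W$ is small or when some induced subcomplex is a single point or empty, and correctly identifying $X'[W] \cap \Delta(\sigma_m)[W]$ with an induced subcomplex of $X'$ so that one may invoke the Leray condition on $X'$. Once those identifications are pinned down, the exact-sequence chase in each of the four sub-claims is short and mechanical.
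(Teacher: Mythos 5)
Your proposal follows essentially the same route as the paper: apply Mayer--Vietoris to the decomposition $X[W] = X'[W] \cup \Delta(\sigma_m)[W]$, use the contractibility of the simplex $\Delta(\sigma_m)[W]$ to collapse the middle term, read off the $M$-equalities directly from the recursive definition, and chase the exact sequence for the Leray bounds. The handling of case~(ii) and the $M$-bookkeeping match the paper exactly.

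The low-dimensional concern you flag at the end is a real one, and it is worth noting that the paper glosses over it. When $X'[W] \cap \Delta(\sigma_m)[W]$ has no vertices (it is never void, since both pieces contain $\emptyset$, but it can equal the empty simplex $\{\emptyset\}$), one has $\tilde H_{-1}(\{\emptyset\}) \cong \mathbb{Z}_2 \ne 0$, so the Mayer--Vietoris argument yields $\tilde H_i(X'[W]) \cong \tilde H_i(X[W])$ only for $i \ge 1$, not for $i = 0$. Your write-up is internally inconsistent on this point: you first assert vanishing of the intersection's reduced homology \emph{in all degrees}, then correctly restrict the isomorphism to $i \ge 1$, but then still conclude $\tilde H_i(X[W]) = 0$ for all $i \ge L(X')$, which does not follow when $L(X') = 0$. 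This is precisely where the clean equality $L(X) = L(X')$ in part (i) can fail: take $X$ with two vertex-disjoint facets, so $m=2$, $X'$ is a simplex with $L(X')=0$, $X' \cap \Delta(\sigma_2) = \{\emptyset\}$ is a simplex, yet $L(X) = 1$. The paper's own proof of (i) has the same flaw, asserting ``trivial homology group for every dimension'' for what may be $\{\emptyset\}$. The downstream main theorem $L(X) \le M(X)$ is unaffected, since $M(X') \ge 1$ always and so $\max\{L(X'),1\} \le M(X')$ still closes the induction, but a correct statement of (i) should either restrict to $m \ge 3$ (forcing $L(X') \ge 1$) or weaken the conclusion to $L(X') \le L(X) \le \max\{L(X'), 1\}$.
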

\begin{proof}
    First, note that we can consider both $X'$ and $\Delta(\sigma_m)$ as simplicial complexes on $V$.
    Take a subset $W$ of $V$.
    By applying Theorem~\ref{thm:mvseq} to $X[W]=X'[W] \cup \Delta(\sigma_m)[W]$, we obtain the long exact sequence~\eqref{mvseq}.
    Note that $\Delta(\sigma_m)[W]$ is a simplex, and hence it has the trivial homology group for every dimension.

    To prove (i), assume that $X' \cap \Delta(\sigma_m)$ is a simplex.
    Then $X'[W]\cap \Delta(\sigma_m)[W]$ is also a simplex, and hence it has the trivial homology group for every dimension.
    Thus, by \eqref{mvseq}, we have $\tilde{H}_i(X'[W]) \simeq \tilde{H}_i(X[W])$ for every $i$.
    This implies that $L(X)=L(X')$.
    In addition, by the definition of $M_{\prec}(X)$, we have $M_{\prec}(X)=M_{\prec'}(X')$.
    Thus, we obtain $$M(X)=M_{\prec}(X)=M_{\prec'}(X')\ge M(X').$$

    Now, to prove (ii), assume that $X' \cap \Delta(\sigma_m)$ is not a simplex.
    Note that $X'[W]\cap \Delta(\sigma_m)[W]$ is an induced subcomplex of $X'$ since $\Delta(\sigma_m)[W]$ is a simplex.
    Then we have $\tilde{H}_i(X'[W])=\tilde{H}_i(X'[W]\cap \Delta(\sigma_m)[W])=0$ for $i \ge L(X')$.
    Thus, by \eqref{mvseq}, we obtain that $\tilde{H}_i(X[W])=0$ for every $i \ge L(X')+1$.
    This proves that $L(X) \le L(X')+1$.
    Since we have $M_{\prec}(X)=M_{\prec'}(X')+1$ by the definition of $M_{\prec}(X)$, we obtain $$M(X)=M_{\prec}(X)=M_{\prec'}(X')+1 \ge M(X')+1$$ as desired.
    \end{proof}

\begin{theorem}\label{thm:main}
For every simplicial complex $X$, we have $L(X)\le M(X)$.
\end{theorem}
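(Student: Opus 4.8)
The plan is to prove the statement by induction on the number $m$ of facets of $X$, using \Cref{lem:inductivestep} as the engine for the inductive step. The base case $m=1$ is immediate: if $X$ has a single facet, then $X$ is a simplex, every induced subcomplex is a simplex (or void), all reduced homology vanishes, so $L(X)=0 \le 1 = M(X)$. (One should be slightly careful about the void and empty-complex conventions, but in all cases $L(X) \le 1$ and $M(X)=1$.)

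For the inductive step, assume the inequality $L(X') \le M(X')$ holds for every simplicial complex $X'$ with fewer than $m$ facets, and let $X$ have $m \ge 2$ facets. First I would fix a linear order $\prec:\sigma_1 \prec \cdots \prec \sigma_m$ of the facets realizing the minimum, i.e.\ $M(X)=M_{\prec}(X)$, and set $X'=\bigcup_{i=1}^{m-1}\Delta(\sigma_i)$ with the induced order $\prec'$, exactly as in \Cref{lem:inductivestep}. Since $X'$ has $m-1$ facets, the induction hypothesis gives $L(X') \le M(X')$. Now I split into the two cases of the lemma. In case (i), when $X' \cap \Delta(\sigma_m)$ is a simplex, the lemma yields $L(X)=L(X')$ and $M(X)=M_{\prec'}(X') \ge M(X')$, so $L(X)=L(X') \le M(X') \le M(X)$. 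In case (ii), when $X' \cap \Delta(\sigma_m)$ is not a simplex, the lemma yields $L(X) \le L(X')+1$ and $M(X)=M_{\prec'}(X')+1 \ge M(X')+1$, so $L(X) \le L(X')+1 \le M(X')+1 \le M(X)$. In both cases $L(X) \le M(X)$, completing the induction.

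One subtlety worth flagging: in \Cref{lem:inductivestep} the chosen order $\prec$ realizes $M(X)$, and the lemma's conclusions relate $M_{\prec'}(X')$ to $M(X')$ by an inequality (not equality), because $\prec'$ need not be an optimal order for $X'$. That inequality points in the favorable direction ($M_{\prec'}(X') \ge M(X')$ in case (i), and $M_{\prec'}(X')+1 \ge M(X')+1$ in case (ii)), so the induction hypothesis $L(X') \le M(X')$ is enough — we never need the reverse bound. Thus the argument is clean and no genuine obstacle arises beyond carefully matching the case split of the lemma; the real content has already been packaged into \Cref{lem:inductivestep}, whose proof via the Mayer--Vietoris sequence \eqref{mvseq} is where the topological work lives.

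\begin{proof}
We proceed by induction on the number $m$ of facets of $X$. If $m=1$, then $X$ is a simplex, so every induced subcomplex $X[W]$ is a simplex or void; in either case $\tilde{H}_i(X[W])=0$ for all $i$, hence $L(X)=0 \le 1 = M(X)$.

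Now suppose $m \ge 2$ and that $L(X') \le M(X')$ holds for every simplicial complex $X'$ with fewer than $m$ facets. Choose a linear order $\prec:\sigma_1 \prec \cdots \prec \sigma_m$ of the facets of $X$ with $M(X)=M_{\prec}(X)$, let $X'=\bigcup_{i=1}^{m-1}\Delta(\sigma_i)$, and let $\prec'$ be the induced order on $\{\sigma_1,\ldots,\sigma_{m-1}\}$. Since $X'$ has $m-1$ facets, the induction hypothesis gives $L(X') \le M(X')$.

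If $X' \cap \Delta(\sigma_m)$ is a simplex, then by \Cref{lem:inductivestep}(i) we have $L(X)=L(X')$ and $M(X)=M_{\prec'}(X') \ge M(X')$, so
\[
L(X)=L(X') \le M(X') \le M(X).
\]
If $X' \cap \Delta(\sigma_m)$ is not a simplex, then by \Cref{lem:inductivestep}(ii) we have $L(X) \le L(X')+1$ and $M(X)=M_{\prec'}(X')+1 \ge M(X')+1$, so
\[
L(X) \le L(X')+1 \le M(X')+1 \le M(X).
\]
In either case $L(X) \le M(X)$, which completes the induction.
\end{proof}
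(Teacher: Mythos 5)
Your proof is correct and is essentially identical to the paper's: both induct on the number of facets, pick an order $\prec$ realizing $M(X)$, peel off the last facet, and invoke \Cref{lem:inductivestep}'s two cases to chain the inequalities. The "subtlety" you flag about $\prec'$ not being optimal for $X'$ is exactly the point the paper's proof also handles via the one-sided inequality $M_{\prec'}(X') \ge M(X')$.
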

\begin{proof}
We induct on the number $m$ of facets of the complex $X$.
Take a linear order $\prec:\sigma_1 \prec \cdots \prec\sigma_m$ such that $M(X)=M_{\prec}(X)$.
If $m = 1$, then $X=\Delta(\sigma_1)$ is a simplex.
In this case, we have $L(X) = 0 \le M(X)=1$.

Now, suppose $m \ge 2$.
Let $X' = \cup_{i=1}^{m-1} \Delta (\sigma_i)$ and assume that $L(X')\le M(X')$.
Let $\prec'$ be the induced order of $\prec$ on the facets of $X'$.
We divide it into two cases as follows.
\begin{enumerate}
    \item[(i)] $X'\cap \Delta(\sigma_m)$ is a simplex.
    \item[(ii)] $X' \cap \Delta(\sigma_m)$ is not a simplex.
\end{enumerate}
In Case (i), by Lemma~\ref{lem:inductivestep}, we have 
\[L(X)=L(X') \text{ and } M(X)=M_{\prec}(X)=M_{\prec'}(X')\ge M(X').\]
Thus, we conclude that 
\[L(X)=L(X') \le M(X') \le M_{\prec'}(X')=M_{\prec}(X)=M(X).\]
In Case (ii), by Lemma~\ref{lem:inductivestep}, we have
\[L(X) \leq L(X')+1 \text{ and } M(X)=M_{\prec}(X)=M_{\prec'}(X')+1 \ge M(X')+1.\]
Hence, we also obtain that 
\[L(X) \leq L(X')+1 \leq M(X')+1 \leq M_{\prec'}(X')+1 =M_{\prec}(X)=M(X).\]
Therefore, we have $L(X) \le M(X)$ in both cases.
\end{proof}

\section{The equality cases}\label{sec:main2}
Let $X$ be a simplicial complex and $\prec:\sigma_1 \prec \cdots \prec \sigma_m$ be a linear order of facets of $X$.
Let $X_j=\cup_{i=1}^{j}\Delta(\sigma_i)$.
We say $\prec$ is a \textit{weak shelling} of $X$ if 
there is a vertex $u_j$ such that $X_{j-1} [\sigma_j\setminus\{u_j\}]$ is a simplex for each $j=2,\ldots,m$.
Note that this definition coincides with a weak shelling that was discussed in \cite[Section 5]{MR4603825} for pure complexes, since the edge set of a hypergraph can be viewed as the set of all facets of the corresponding simplicial complex.

The purpose of this section is to characterize the equality case of Theorem~\ref{thm:main} where a simplicial complex $X$ admits a weak shelling $\prec$ such that $M_{\prec}(X)=M(X)$.

\begin{theorem}\label{thm:eqaulity}
    Let $X$ be a non-empty simplicial complex on $V$ with $L(X)=M(X)$. Suppose there is a weak shelling $\prec$ of $X$ such that $M_{\prec}(X)=M(X)$. Then each of the following holds:
    \begin{enumerate}
        \item $L(X)=1$ $\iff$ $M(X)=1$ and $X$ is not a simplex.
        \item $L(X)=2$ $\iff$ $M(X)=2$ and $\tilde{H}_1(X[W]) \neq 0$ for some $W \subset V$. If $\tilde{H}_1(X[W]) \neq 0$, then $X[W]$ contains an induced cycle, which corresponds to a generator of $\tilde{H}_1(X[W])$.
        \item $L(X)=k$ for $k \ge 3$ $\iff$ $M(X)=k$ and $\tilde{H}_{k-1}(X[W]) \neq 0$ for some $W \subset V$.
        If $\tilde{H}_{k-1}(X[W]) \neq 0$, then $X[W]$ contains an induced subcomplex isomorphic to $\partial \Delta(k+1)$, which corresponds to a generator of $\tilde{H}_{k-1}(X[W])$.
    \end{enumerate}
    Furthermore, if $L(X)=k$ for $k \ge 2$, then $\tilde{\beta}_{k-1}(X[W]) \le 1$ for every $W \subset V$.
\end{theorem}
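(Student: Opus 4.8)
Fix a weak shelling $\prec\colon \sigma_1\prec\cdots\prec\sigma_m$ with $M_\prec(X)=M(X)$, write $k:=L(X)=M(X)$, and $X_j=\bigcup_{i\le j}\Delta(\sigma_i)$. The equivalences in (1)–(3) are immediate from \Cref{thm:main} and the meaning of the Leray number together with the standing hypothesis $L(X)=M(X)$: if $L(X)=k$ then $M(X)=k$ by hypothesis, $X$ is not a simplex when $k\ge 1$ (simplices are $0$-Leray), and some induced subcomplex has nonzero $\tilde H_{k-1}$ while all have trivial $\tilde H_i$ for $i\ge k$; conversely $L(X)\le M(X)$ by \Cref{thm:main}, a non-simplex has $L(X)\ge 1$ (a minimal non-face $F$ gives $\tilde H_{|F|-2}(X[F])\neq 0$), and $\tilde H_{k-1}(X[W])\neq 0$ forces $L(X)\ge k$. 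So the content is the structural statement and the bound $\tilde\beta_{k-1}(X[W])\le 1$, which I would prove by induction on $m$, following the case split of \Cref{lem:inductivestep} for $X'=X_{m-1}$ with the induced order $\prec'$. Exactly as in the proof of \Cref{thm:main}, $X'$ again satisfies the theorem's hypotheses, with the same parameter $k$ in case (i) and parameter $k-1$ in case (ii); the base case $m=1$ is trivial. Throughout set $Z:=\sigma_m\cap W$ and use \eqref{mvseq}, noting $\Delta(\sigma_m)[W]$ is a simplex and $X'[W]\cap\Delta(\sigma_m)[W]=X'[Z]$.

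\emph{The bound $\tilde\beta_{k-1}(X[W])\le 1$.} In case (i), $X'[Z]$ is a simplex, so \eqref{mvseq} gives $\tilde H_\ast(X'[W])\cong\tilde H_\ast(X[W])$ via inclusion and the bound transfers by induction. In case (ii), let $u_m$ witness the weak shelling at step $m$, so $X'[\sigma_m\setminus\{u_m\}]$ is a simplex. If $u_m\notin W$ then $X'[Z]$ is a simplex and \eqref{mvseq} forces $\tilde H_{k-1}(X[W])\cong\tilde H_{k-1}(X'[W])=0$ (as $L(X')=k-1$), so there is nothing to prove; hence assume $u_m\in W$. Then $X'[Z]$ is a simplex with one extra vertex $u_m$ adjoined, so $\tilde H_{k-2}(X'[Z])$ is at most one-dimensional — directly for $k=2$ (at most two components, so $\dim\tilde H_0\le1$), and from the inductive hypothesis applied to the induced subcomplex $X'[Z]$ of $X'$ for $k\ge 3$. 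Since $X'$ is $(k-1)$-Leray, $\tilde H_{k-1}(X'[Z])=\tilde H_{k-1}(X'[W])=0$, so \eqref{mvseq} collapses to an injection $\tilde H_{k-1}(X[W])\hookrightarrow\tilde H_{k-2}(X'[Z])$; hence $\tilde\beta_{k-1}(X[W])\le 1$.

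\emph{The induced cycle / $\partial\Delta(k+1)$, easy cases.} In case (i) one shows the induced cycle (for $k=2$) or induced $\partial\Delta(k+1)$ (for $k\ge 3$) produced by the inductive hypothesis for $X'[W]$, on a vertex set $U$, stays induced in $X[W]$: if $\Delta(\sigma_m)$ filled in $U$ (i.e.\ $U\in X\setminus X'$, forcing $U\subseteq\sigma_m$) then the fundamental cycle would bound in $\Delta(\sigma_m)[W]\subseteq X[W]$ and die there, contradicting that it generates $\tilde H_{k-1}(X[W])$; and since $X'\cap\Delta(\sigma_m)$ is a simplex, $\sigma_m\cap U$ is already a face of $X'[U]$, so $X[U]=X'[U]$. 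For $k=2$ in case (ii) I would argue directly: choose a $1$-cycle $z$ of $X[W]$ with $[z]\neq 0$ of minimum support; then $\operatorname{supp}(z)$ is a single simple cycle (otherwise it splits into simple cycles, one of smaller support and nonzero class), and it is induced (a chord would split it into two shorter cycles, one of smaller support and nonzero class); combined with $\tilde\beta_1(X[W])\le 1$, $[z]$ generates.

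\emph{Case (ii), $k\ge 3$: the structural claim and the main obstacle.} Here $L(X')=M(X')=k-1\ge 2$, so the inductive hypothesis applies to $X'[Z]$: it contains an induced $\partial\Delta(k)$ (for $k\ge 4$; for $k=3$ the induced cycle it supplies must again be a $\partial\Delta(3)$, since removing a vertex from a longer cycle leaves a non-simplex while $X'[Z\setminus\{u_m\}]$ is a simplex) on a vertex set $U'$ whose fundamental cycle $w$ generates $\tilde H_{k-2}(X'[Z])\cong \mathbb Z_2$; and from $X'$ being $(k-1)$-Leray the sequence \eqref{mvseq} gives an isomorphism $\delta\colon\tilde H_{k-1}(X[W])\xrightarrow{\sim}\tilde H_{k-2}(X'[Z])$. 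Because $X'[Z\setminus\{u_m\}]$ is a simplex and a simplex contains no induced $\partial\Delta$, one must have $u_m\in U'$, and then $X'[U'\setminus\{u_m\}]$ is at once a simplex and the full simplex on $U'\setminus\{u_m\}$, which forces $U'\subseteq\sigma_m$; thus $U'$ is a face of $X$ but not of $X'$, so the $\partial\Delta(k)$ of $X'$ is filled in by $\Delta(\sigma_m)$ and must be ``re-opened'' in a new direction. The plan is: find $v_0\in W\setminus\sigma_m$ with $v_0\ast\partial\Delta(U')\subseteq X'$ (equivalently $\tilde H_{k-2}(X'[U'\cup\{v_0\}])=0$); then $U:=U'\cup\{v_0\}$ has $|U|=k+1$, $X[U]=\partial\Delta(U)\cong\partial\Delta(k+1)$ (as $v_0\notin\sigma_m$ gives $U\not\subseteq\sigma_m$ and $U\notin X$, while every proper subset of $U$ is a face of $X$), and its fundamental cycle $U'+v_0\ast w$ maps under $\delta$ to $[w]\neq 0$, so generates. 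The hard part — and the one step I do not expect to be routine — is producing this apex $v_0$: I would first reduce to the case that $W$ is minimal with $\tilde H_{k-1}(X[W])\neq 0$, which (via \eqref{mvseq} for $W\setminus\{v\}$) forces every vertex of $W\setminus\sigma_m$ to be essential for the class $[w]$ to become null-homologous in $X'$, and then use the rigidity of the weak-shellable complex $X'$ with $M(X')=L(X')$ — concretely, that a minimal null-filling of the sphere $\partial\Delta(U')$ inside $X'$ is a single cone — to conclude $|W\setminus\sigma_m|=1$ with that vertex a cone apex; establishing this cleanly is the delicate point and may require peeling the vertices of $W\setminus\sigma_m$ off one at a time with a secondary Mayer--Vietoris argument.
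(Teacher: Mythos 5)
Your overall architecture matches the paper's: induction on the number of facets, the Mayer--Vietoris sequence \eqref{mvseq} for $X = X'\cup\Delta(\sigma_m)$, the case split of \Cref{lem:inductivestep}, the injectivity of the connecting map giving $\tilde\beta_{k-1}(X[W])\le\tilde\beta_{k-2}(X'[Z])\le 1$, and the direct chord/decomposition argument for $k=2$. These parts are sound. The conclusion you draw once the apex exists is also fine (and in fact slightly cleaner than the paper's, which rules out $[\tilde g]$ being a boundary by a second contradiction argument rather than by noting $\delta_{k-1}([\tilde g])=[g]\neq 0$).

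However, the step you flag as ``the one step I do not expect to be routine'' is precisely the step that carries the whole theorem for $k\ge 3$, and your sketch of it does not work as stated. The paper isolates this as \Cref{lem:commonvertex}: given the induced $\partial\Delta(k)$ on $\{v_1,\dots,v_k\}$ and \emph{any} $(k-1)$-chain $\tau$ with $\partial_{k-1}\tau=\sum g_i$, there is a single vertex $w$ in the support of $\tau$ with $g_i\cup\{w\}\in Y$ for all $i$. Its proof is not a rigidity statement that ``a minimal null-filling is a single cone'' (the filling chain can be large and is never shown to be a cone); instead it orders the facets $\sigma_{n(1)},\dots,\sigma_{n(k)}$ first containing each $g_i$, shows $\sigma_{n(3)},\dots,\sigma_{n(k)}$ are exactly the steps where $M_\prec$ increments, pushes the filling chain down into $Y_{n(k)}$ using that all later attachments are along simplices, splits it across $Z=Y_{n(k)-1}$ and $\Delta(\sigma_{n(k)})$, and runs an inner induction on $k$ whose base case $k=3$ uses the weak-shelling vertex of $\sigma_{n(3)}$. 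Your proposed reduction is also independently problematic: passing to a minimal $W$ with $\tilde H_{k-1}(X[W])\neq 0$ does not obviously preserve the property that the induced $\partial\Delta(k+1)$ found there still generates after including back into $X[W]$, and nothing in minimality forces $|W\setminus\sigma_m|=1$. So there is a genuine gap: you need (and should prove) the analogue of \Cref{lem:commonvertex} before the $k\ge 3$ structural claim goes through. Note also that the same lemma is needed a second time in the paper to verify that $\{v_1,\dots,v_k,w\}$ is not itself a face filled in elsewhere; your version avoids that second use only because of the connecting-map computation, which you should then spell out (decompose $\tilde g=\sum_i(g_i\cup\{w\})+\delta$ with $\delta\subseteq\sigma_m$ and $g_i\cup\{w\}\in X'[W]$, and check $\delta\notin X'$ to conclude $\{v_1,\dots,v_k,w\}\notin X$).
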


\begin{remark}
    Theorem~\ref{thm:eqaulity} may not hold for a simplicial complex $X$ that does not admit a weak shelling $\prec$ such that $M_\prec(X) = M(X)$. For example, let $X$ be a complex on $\{1,2,3,4,5\}$ with facets $\{1234, 125, 235, 345, 145\}$. Here, $M(X)=3=L(X)$ but clearly $X$ does not contain the boundary of a simplex on $4$ vertices.
\end{remark}

We will present a proof of Theorem~\ref{thm:eqaulity} in the end of this section.

\begin{lemma}\label{lem:inducedorder}
    Let $X$ be a simplicial complex on $V$ and $\prec$ be a weak shelling of $X$.
    Then, for every $W \subset V$, there exists a weak shelling $\prec_W$ of $X[W]$ such that $M_{\prec_W}(X[W]) \le M_{\prec}(X)$.
\end{lemma}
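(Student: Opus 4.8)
The plan is to build $\prec_W$ directly from $\prec$ by ``pushing down'' the facet structure to the induced subcomplex $X[W]$. Concretely, let $\sigma_1 \prec \cdots \prec \sigma_m$ be the weak shelling of $X$, and for each $j$ set $\tau_j := \sigma_j \cap W$. Each facet of $X[W]$ is of the form $\tau_j$ for some $j$, but several $\sigma_j$ may collapse to the same $\tau_j$, and some $\tau_j$ need not be a facet of $X[W]$ (it could be contained in $\tau_{j'}$ for some other $j'$, or in $X_{j-1}[W]$ already). So first I would define $\prec_W$ on the facets of $X[W]$ by: order the \emph{distinct} maximal sets among $\tau_1, \dots, \tau_m$ according to the first index $j$ at which each appears. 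Writing $Y_j := X_j[W] = \bigcup_{i \le j} \Delta(\tau_i)$, note $Y_j = Y_{j-1}$ whenever $\tau_j \subseteq Y_{j-1}$, so these ``redundant'' steps contribute nothing; the facets of $X[W]$ appear among the $\tau_j$ that genuinely enlarge the union.

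The key step is to show this ordering is a weak shelling of $X[W]$ and that it does not increase $M$. For the weak shelling property: fix $j$ such that $\tau_j$ is a new facet of $X[W]$ and $Y_{j-1} \neq \emptyset$. The weak shelling of $X$ gives a vertex $u_j$ with $X_{j-1}[\sigma_j \setminus \{u_j\}]$ a simplex. If $u_j \notin W$, then $\sigma_j \setminus \{u_j\} \supseteq \tau_j$, so $X_{j-1}[\tau_j] = X_{j-1}[\sigma_j \setminus \{u_j\}] \cap 2^{\tau_j}$ is an induced subcomplex of a simplex, hence a simplex, and in fact equals $Y_{j-1}[\tau_j]$ — wait, one must check $X_{j-1}[\tau_j] = Y_{j-1}[\tau_j]$, which holds because a subset of $\tau_j \subseteq W$ lies in some $\Delta(\sigma_i)$, $i<j$, iff it lies in $\Delta(\tau_i)$. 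So $Y_{j-1}[\tau_j]$ is a simplex and we may take $u_j$'s role to be vacuous (any vertex of $\tau_j$ works, since $Y_{j-1}[\tau_j]$ is already a simplex). If $u_j \in W$, then $\tau_j \setminus \{u_j\} = (\sigma_j \cap W) \setminus \{u_j\} \subseteq \sigma_j \setminus \{u_j\}$, and the same argument shows $Y_{j-1}[\tau_j \setminus \{u_j\}] = X_{j-1}[\tau_j \setminus \{u_j\}]$ is an induced subcomplex of the simplex $X_{j-1}[\sigma_j \setminus \{u_j\}]$, hence a simplex; so $u_j \cap W = u_j$ serves as the required vertex for $\tau_j$ in $Y_{j-1}$.

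For the inequality $M_{\prec_W}(X[W]) \le M_{\prec}(X)$, I would track the two inductive sequences in parallel over $j = 1, \dots, m$ and argue that $M_{\prec_W}(Y_j) \le M_{\prec}(X_j)$ at every step. The base case $j=1$ is $1 \le 1$. For the inductive step, $M_{\prec}(X_j)$ either equals $M_{\prec}(X_{j-1})$ or $M_{\prec}(X_{j-1}) + 1$. If $\tau_j$ is redundant ($Y_j = Y_{j-1}$), then $M_{\prec_W}(Y_j) = M_{\prec_W}(Y_{j-1}) \le M_{\prec}(X_{j-1}) \le M_{\prec}(X_j)$ and we are done. Otherwise $\tau_j$ genuinely enlarges, and the only danger is that $M$ jumps on the $W$ side while it does not on the $X$ side, i.e. $X_{j-1} \cap \Delta(\sigma_j)$ is a simplex but $Y_{j-1} \cap \Delta(\tau_j)$ is not. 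So the crux is: if $X_{j-1} \cap \Delta(\sigma_j)$ is a simplex, then $Y_{j-1} \cap \Delta(\tau_j)$ is a simplex. Now $Y_{j-1} \cap \Delta(\tau_j) = Y_{j-1}[\tau_j] = X_{j-1}[\tau_j]$ (by the identity noted above), and this is the restriction of the simplex $X_{j-1} \cap \Delta(\sigma_j) = X_{j-1}[\sigma_j]$ to the vertex subset $\tau_j \subseteq \sigma_j$, hence a face of that simplex's closure — in particular a simplex. Thus in the jump case on the $X$ side, $M_{\prec_W}(Y_j) \le M_{\prec_W}(Y_{j-1}) + 1 \le M_{\prec}(X_{j-1}) + 1 = M_{\prec}(X_j)$, and in the non-jump case $M_{\prec_W}(Y_j) \le M_{\prec_W}(Y_{j-1}) \le M_{\prec}(X_{j-1}) = M_{\prec}(X_j)$. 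This closes the induction, giving $M_{\prec_W}(X[W]) = M_{\prec_W}(Y_m) \le M_{\prec}(X_m) = M_{\prec}(X)$.

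The main obstacle I anticipate is purely bookkeeping: correctly handling the redundant indices $j$ (where $\tau_j$ adds nothing, or where a facet of $X$ restricts to a non-facet of $X[W]$), and verifying cleanly that ``restriction of a simplex is a simplex'' is exactly the fact $X_{j-1}[\sigma_j] \cap 2^{\tau_j} = X_{j-1}[\tau_j]$ together with the observation that an induced subcomplex of a simplex $\Delta(\sigma)$ on any $\tau \subseteq \sigma$ is the simplex $\Delta(\tau)$. Once the identity $X_{j-1}[\tau_j] = Y_{j-1}[\tau_j]$ is isolated as a preliminary remark, both the weak-shelling check and the $M$-inequality follow by the routine parallel induction sketched above.
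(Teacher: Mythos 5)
Your construction of $\prec_W$ (ordering the facets of $X[W]$ by the first index $j$ at which they appear among the restrictions $\tau_j = \sigma_j \cap W$) is the same as the paper's, and the high-level plan of transporting the weak shelling and $M$ conditions through restriction is also the same. However, there is a genuine gap that is not merely bookkeeping: you verify the simplex conditions against $Y_{j-1} := X_{j-1}[W] = \bigcup_{i<j}\Delta(\tau_i)$, whereas the definitions of weak shelling and of $M_{\prec_W}$ require them against the union of the \emph{preceding facets of $X[W]$} in $\prec_W$, namely $Z_{\ell-1} := \bigcup_{s<\ell}\Delta(F_s)$. These two complexes need not coincide: a restriction $\tau_i$ with $i < j$ can fail to be a facet of $X[W]$ because it is strictly contained only in some $\tau_{i'}$ with $i' > j$, so that $\Delta(\tau_i) \subseteq Y_{j-1}$ but $\Delta(\tau_i)\not\subseteq Z_{\ell-1}$. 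Since $Z_{\ell-1}[F_\ell\setminus\{u\}]$ is then only a subcomplex of the simplex $Y_{j-1}[F_\ell\setminus\{u\}]$, and a subcomplex of a simplex need not be a simplex, neither the weak shelling condition nor the $M$-step carries over.

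Concretely, take $X$ with facets $\sigma_1=\{1,4,6\}$, $\sigma_2=\{2,5,7\}$, $\sigma_3=\{1,2,8\}$, $\sigma_4=\{1,2,3\}$ and this order $\prec$; one checks $\prec$ is a weak shelling with $M_\prec(X)=2$, and the weak shelling condition at $j=4$ is satisfied with $u_4=3$ since $X_3[\{1,2\}]=2^{\{1,2\}}$. Now let $W=\{1,2,3,4,5\}$. Then $\tau_3=\{1,2\}$ is not a facet of $X[W]$ (it sits inside $\tau_4=\{1,2,3\}$), and $\prec_W$ is $\{1,4\}\prec_W\{2,5\}\prec_W\{1,2,3\}$. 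At $\ell=3$ your argument uses $u_4=3$ and correctly observes that $Y_3[\{1,2\}]=2^{\{1,2\}}$ is a simplex; but the actual requirement is that $(\Delta(\{1,4\})\cup\Delta(\{2,5\}))[\{1,2\}]=\{\emptyset,\{1\},\{2\}\}$ be a simplex, which it is not. The conclusion happens to hold here with a different choice of vertex ($u=1$ or $u=2$ works), so the lemma itself is not contradicted, but your argument does not establish it; the same phenomenon invalidates the claimed inequality $M_{\prec_W}(Y_j)\le M_\prec(X_j)$ step-by-step, because the quantity you track along $\tau_1,\dots,\tau_m$ is not $M_{\prec_W}$ of the facet subsequence. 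You would need an additional argument showing how to discard the redundant $\tau_i$'s (or choose a different ordering and/or vertex $u$) without losing the weak shelling property. For what it is worth, the paper's own proof of this lemma is very terse and its assertion that $\{\sigma_j\cap W : j\in U\}$ is \emph{precisely} the set of facets of $X[W]$ is false in exactly this situation (above, $\sigma_3\cap W=\{1,2\}$ lies in $U$ but is not a facet), so the gap you inherit is arguably already present in the source.
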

\begin{proof}
    Suppose $\prec$: $\sigma_1 \prec \cdots \prec \sigma_m$ is a weak shelling of $X$ and let $W$ be any subset of~$V$.
    Then each facet of $X[W]$ can be expressed as the intersection $\sigma_i \cap W$ for some $i\in[m]$.
    Let $$U = \{j\in[m]: \text{there is no }i<j\text{ such that }\sigma_i \cap W=\sigma_j \cap W\}.$$
    Then $\{\sigma_j \cap W: j\in U\}$ is precisely the set of all facets of $X[W]$.
    Define a linear order $\prec_W$ of facets of $X[W]$ by $\sigma_i \cap W \prec_W \sigma_j \cap W$ if $\sigma_i \cap W \neq \sigma_j \cap W$ and $i<j$.
    It immediately follows from the definition that $\prec_W$ is a weak shelling of $X[W]$ and $M_{\prec_W}(X[W]) \le M_{\prec}(X)$.
\end{proof}

The following lemma is a key ingredient that will be used in an inductive argument of the proof of Theorem~\ref{thm:eqaulity}. 

\begin{lemma}\label{lem:commonvertex}
Let $k\geq3$ be an integer, and let $Y$ be a simplicial complex with a weak shelling $\prec$ such that $L(Y)=M_{\prec}(Y)=k-1$.
Suppose $\{v_1,\dots,v_k\}$ induces a subcomplex of $Y$ isomorphic to $\partial\Delta(k)$, and for each $i$ let $g_i=\{v_1,\dots,v_k\}\setminus\{v_i\}$.
If there are $(k-1)$-dimensional faces $\tau_1,\ldots,\tau_q$ of $Y$ such that $\partial_{k-1}(\sum_{i=1}^{q}\tau_i)=\sum_{i=1}^{k}g_i$, then there is a vertex $w \in \cup_{i=1}^{q}\tau_i$ such that $g_i\cup\{w\} \in Y$ for each $i \in [k]$.
\end{lemma}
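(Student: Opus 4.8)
The plan is to recast the conclusion homologically, shrink the ambient complex, and then induct along a weak shelling.

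\emph{Step 1 (Reformulation).} Put $A=\{v_1,\dots,v_k\}$, $z=\partial_{k-1}\bigl(\sum_{j=1}^q\tau_j\bigr)=\sum_{i=1}^k g_i$, and $S=\bigcup_{j=1}^q\tau_j$. Fix a vertex $w\notin A$. The only $k$-subsets of $A\cup\{w\}$ are $A$ itself — a non-face, since $Y[A]=\partial\Delta(k)$ — and the sets $g_i\cup\{w\}$; hence the $(k-1)$-faces of $Y[A\cup\{w\}]$ are precisely those $g_i\cup\{w\}$ lying in $Y$. A direct computation gives $\partial_{k-1}\bigl(\sum_{i=1}^k(g_i\cup\{w\})\bigr)=z$: the summand $g_i$ comes from deleting $w$, while every summand that still contains $w$ has the form $(A\setminus\{v_a,v_b\})\cup\{w\}$ and arises from exactly the two values $i=a$ and $i=b$, so it cancels mod $2$. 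Since the $g_i$ are distinct basis vectors of $C_{k-2}(Y)$, a $(k-1)$-chain of $Y[A\cup\{w\}]$ cannot have boundary $z$ unless all of $g_1\cup\{w\},\dots,g_k\cup\{w\}$ are faces, in which case $\sum_i(g_i\cup\{w\})$ is such a chain. Therefore $g_i\cup\{w\}\in Y$ for every $i$ if and only if $[z]=0$ in $\tilde H_{k-2}(Y[A\cup\{w\}])$, so it suffices to produce $w\in S$ with this property. Note that $[z]\neq0$ in $\tilde H_{k-2}(Y[A])=\tilde H_{k-2}(\partial\Delta(k))$, whereas $[z]=0$ in $\tilde H_{k-2}(Y[A\cup S])$, the chain $\sum_j\tau_j$ being a witness.

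\emph{Step 2 (Reduction, induction, and the simplicial case).} By \Cref{lem:inducedorder}, $Y[A\cup S]$ carries a weak shelling $\prec'$ with $M_{\prec'}(Y[A\cup S])\le M_\prec(Y)=k-1$; since $Y[A\cup S]$ contains $\partial\Delta(k)$ as an induced subcomplex we get $L(Y[A\cup S])\ge k-1$, so \Cref{thm:main} forces $L=M_{\prec'}=k-1$ for $Y[A\cup S]$ as well. As the $\tau_j$ and $g_i$ all live there, all hypotheses persist and we may assume $V(Y)=A\cup S$; in particular every vertex of $Y$ outside $A$ lies in $S$. Now induct on the number of facets: take a weak shelling achieving $M_\prec(Y)=k-1$, let $\rho$ be its last facet, $Y'$ the union of the remaining facets with the induced weak shelling $\prec'$, and $u\in\rho$ a weak-shelling vertex, so $\rho\setminus\{u\}\in Y'$. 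If $Y'\cap\Delta(\rho)$ is a simplex, then $M_{\prec'}(Y')=k-1$, and by \Cref{lem:inductivestep}(i) and its proof (the intersection term of \eqref{mvseq} being a simplex, with $k-2\ge1$) the inclusion $Y'\hookrightarrow Y$ induces isomorphisms $\tilde H_{k-2}(Y'[W])\cong\tilde H_{k-2}(Y[W])$ for every $W$. Hence $\tilde H_{k-2}(Y'[A])\neq0$; since $Y'[A]$ is a subcomplex of $\partial\Delta(k)$ and every proper subcomplex of $\partial\Delta(k)$ has vanishing $(k-2)$-nd homology, $Y'[A]=\partial\Delta(k)$, and likewise $[z]=0$ in $\tilde H_{k-2}(Y'[A\cup S])$, so $z$ is the boundary of a chain of $(k-1)$-faces of $Y'$. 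Thus $Y'$ satisfies all hypotheses with fewer facets, and by induction there is a vertex $w'$ in the support of that chain with $g_i\cup\{w'\}\in Y'$ for all $i$; since $w'\notin A$ and $V(Y')\subseteq V(Y)=A\cup S$, we have $w'\in S$, which finishes this case.

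\emph{Step 3 (The non-simplicial case — the main obstacle).} Suppose instead $Y'\cap\Delta(\rho)$ is not a simplex. Then $M_{\prec'}(Y')=k-2$, so $L(Y')\le k-2$ by \Cref{thm:main}, and $Y'$ cannot contain $\partial\Delta(k)$ as an induced subcomplex; hence some $g_{i_0}\subseteq\rho$ with $g_{i_0}\notin Y'$, and since $\rho\setminus\{u\}\in Y'$ this forces $u\in g_{i_0}\subseteq A$ and (otherwise $A\subseteq\rho$) $v_{i_0}\notin\rho$. One now studies the Mayer--Vietoris sequence for $Y=Y'\cup\Delta(\rho)$, whose intersection is the induced subcomplex $Y'[\rho]\supseteq\Delta(\rho\setminus\{u\})$ and whose second piece $\Delta(\rho)$ is a cone: the class $[z]$ vanishes in $\tilde H_{k-2}(Y[A\cup S])$ but not in $\tilde H_{k-2}(Y[A])$, so it must be ``accounted for by $\rho$''. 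Tracking the filling chain $\sum_j\tau_j$ across the sequence, together with the cone structure of $\Delta(\rho)$ and the face $\rho\setminus\{u\}$, should isolate a single vertex $w\in\rho$ — which lies in $S$ because $V(Y)=A\cup S$ — already killing $[z]$ in $\tilde H_{k-2}(Y[A\cup\{w\}])$. Making this last step rigorous, i.e.\ showing that the killing of $[z]$ localizes to one added vertex, is the technical heart of the proof, and it is exactly where the hypothesis $M_\prec(Y)=L(Y)$ is indispensable: without it — for instance for an over-subdivided $(k-1)$-ball filling $\partial\Delta(k)$, whose $M$ exceeds its Leray number — no common vertex $w$ need exist.
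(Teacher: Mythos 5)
Your Steps 1 and 2 are correct and genuinely clean. The homological reformulation in Step 1 (that $g_i\cup\{w\}\in Y$ for all $i$ is equivalent to $[z]=0$ in $\tilde H_{k-2}(Y[A\cup\{w\}])$) is a nice observation not made explicit in the paper, and the reduction to $V(Y)=A\cup S$ via Lemma~\ref{lem:inducedorder} and Theorem~\ref{thm:main} is sound. The ``simplicial peel'' case in Step 2 is also handled correctly: the isomorphism from Lemma~\ref{lem:inductivestep}(i) does transport all three hypotheses (the induced $\partial\Delta(k)$, the filling chain, and $L=M_{\prec'}=k-1$) down to $Y'$, so the facet-induction works there.

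However, Step 3 is not a proof but an announcement of the difficulty. You write that the Mayer--Vietoris bookkeeping, together with the cone structure of $\Delta(\rho)$ and the weak-shelling face $\rho\setminus\{u\}$, \emph{should} isolate a single vertex $w\in\rho$ that kills $[z]$ in $Y[A\cup\{w\}]$, and you correctly flag this as ``the technical heart.'' That is precisely the content of the lemma, and it is not supplied. Note also that, under your reduction to $V(Y)=A\cup S$ with $k\geq 3$ (so $M_\prec(Y)=k-1\geq 2$), a non-simplicial attachment must occur: there is no facet-induction base case that avoids Step~3. So the gap is unavoidable along this route, not a corner case.

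For comparison, the paper does not induct on the number of facets but on $k$. The key move is to pick, for each $g_j$, the first facet $\sigma_{n(j)}$ containing it; after reindexing so $n(1)<\dots<n(k)$, the facets $\sigma_{n(3)},\dots,\sigma_{n(k)}$ are exactly the $k-2$ non-simplicial attachments, so $Z:=Y_{n(k)-1}$ again satisfies $L(Z)=M(Z)=k-2$. The filling chain is first pushed from $Y_m$ down to $Y_{n(k)}$ (the analogue of your simplicial peels), then split across $Z$ and $\sigma_{n(k)}$; the $\sigma_{n(k)}$-part necessarily hits $g_k$, producing a $(k-2)$-chain of $Z$ filling $\partial g_k$. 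This drops the dimension by one, and the induction on $k$ (plus Claim~\ref{claim:induc-step}, which promotes a cone vertex for $g_k$ in $Z$ to a cone vertex for the whole sphere in $Y$) yields the vertex $w$. The base case $k=3$ is a short graph-theoretic argument exploiting the weak-shelling vertex of $\sigma_{n(3)}$. If you want to salvage your outline, the missing ingredient in Step~3 is precisely this dimensional reduction: rather than trying to localize $[z]$ directly via Mayer--Vietoris at the non-simplicial facet $\rho$, pass to $Z=Y'$, replace $A$ by the $(k-1)$-set $g_{i_0}$ (the one that forces the non-simplicial attachment), extract a filling $(k-2)$-chain of $\partial g_{i_0}$ inside $Z$, and recurse in $k$.
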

\begin{proof}
    Let $\prec: \sigma_1 \prec \cdots \prec \sigma_s$ be a weak shelling of $Y$ such that $M_{\prec}(Y)=k-1$. Let $Y_j = \bigcup_{i=1}^{j} \Delta(\sigma_i)$ and $\tau = \sum_{i=1}^{q}\tau_i$.

    We first define a function $n:[k]\to[s]$ such that for each $j\in[k]$, $n(j)$ is the smallest integer $\ell$ where $\sigma_{\ell}$ contains $g_j$.
    Note that $n(j) \neq n(j')$ if $j \neq j'$: whenever $j \neq j'$, the union $g_j\cup g_{j'}=\{v_1,\dots,v_k\}$ is not a face of $Y$.
    By rearranging $g_i$'s if necessary, we may assume that $n(1)<\dots<n(k)$.

    We claim that $Y_{n(j)-1} \cap \Delta(\sigma_{n(j)})$ is not a simplex for each $3 \le j \le k$, and hence $\sigma_{n(3)}, \ldots, \sigma_{n(k)}$ are precisely the $k-2$ facets of $Y$ that make $M_{\prec}(Y) = k-1$.
    First observe that $$V(Y_{n(j)-1} \cap \Delta(\sigma_{n(j)})) \supset (\sigma_{n(1)}\cup\sigma_{n(2)})\cap\sigma_{n(j)} \supset g_j.$$
    Thus, if $Y_{n(j)-1} \cap \Delta(\sigma_{n(j)})$ is a simplex, there must be a facet $\sigma_{j'}$ of $Y$ with $j' < n(j)$ such that $g_j \subset \sigma_{j'}$, which is a contradiction to the minimality of $n(j)$.
    Therefore, $Y_{n(j)-1} \cap \Delta(\sigma_{n(j)})$ is not a simplex.

    Let $Z = Y_{n(k)-1}$, and let $\prec'$ be the induced order of $\prec$ on $\{\sigma_1,\ldots,\sigma_{n(k)-1}\}$. By the above observation, it is clear that $M_{\prec'}(Z) = k-2$.
    It follows from Theorem~\ref{thm:main} that $L(Z) \leq k-2$.
    On the other hand, noting that $Z[g_k] \cong \partial\Delta(k-1)$, we have $L(Z) \geq k-2$.
    Combining the two inequalities, we obtain $L(Z) = M_{\prec'}(Z) = k-2$.

    Next, let $m$ be the smallest integer such that $Y_m$ contains the $(k-1)$-chain $\tau$. It is obvious that $m \geq n(k)$. 
    We claim that $Y_{n(k)}$ has a $(k-1)$-chain $\lambda = \sum_{i=1}^{q'}\lambda_i$ such that $\bigcup_{i=1}^{q'}\lambda_i \subset \bigcup_{i=1}^{q}\tau_i$ and $\partial_{k-1}(\sum_{i=1}^{q}\lambda_i)=\sum_{i=1}^{k}g_i$.
    The claim is obviously true when $m = n(k)$, so we may assume $m > n(k)$.
    Clearly, $Y_{m-1}\cap\Delta(\sigma_m)$ must be a simplex: otherwise, it is a contradiction to $M_\prec(Y) = k-1$. Since $Y_m = Y_{m-1} \cup \Delta(\sigma_m)$, $\tau$ can be expressed as $\tau=\kappa_1+\kappa_2$ where $\kappa_1$ is a $(k-1)$-chain of $Y_{m-1}$ and $\kappa_2$ is a $(k-1)$-chain of $\Delta(\sigma_m)$. Since $$m > n(k)\;\;\text{and}\;\;\partial_{k-1}(\kappa_2) = \partial_{k-1}(\tau) - \partial_{k-1}(\kappa_1) = \sum_{i=1}^k g_i - \partial_{k-1}(\kappa_1),$$ we observe that $\partial_{k-1}(\kappa_2)$ is a $(k-2)$-cycle of $Y_{m-1}\cap\Delta(\sigma_m)$.
    Since $Y_{m-1}\cap\Delta(\sigma_m)$ is a simplex, we can find a $(k-1)$-chain $\kappa_2'$ of $Y_{m-1}$ such that the vertex set of $\kappa_2'$ equals that of $\partial_{k-1}(\kappa_2)$ and $\partial_{k-1}(\kappa_2')=\partial_{k-1}(\kappa_2)$. 
    Then $\kappa_1+\kappa_2'$ is a $(k-1)$-chain of $Y_{m-1}$ where $\partial_{k-1}(\kappa_1+\kappa_2') = \sum_{i=1}^k g_i$ and all vertices of $\kappa_1+\kappa_2'$ belong to the vertex set of $\tau$. 
    By repeating the above argument, we can find a $(k-1)$-chain $\lambda$ of $Y_{n(k)}$ where $\partial_{k-1}(\lambda) = \sum_{i=1}^k g_i$ and all vertices of $\lambda$ belong to the vertex set of $\tau$, as desired.

    Now, let  $\lambda = \sum_{i=1}^{q'}\lambda_i$.
    We may assume that $Z \cap \{\lambda_1,\ldots,\lambda_{q'}\} = \{\lambda_1,\ldots,\lambda_p\}$ for some $1 \leq p \leq q'$.
    In particular, $\lambda_{p+1},\ldots,\lambda_{q'} \subset \sigma_{n(k)}$.
    Let $e_1,\dots,e_{\ell}$ be the $(k-2)$-dimensional faces that have non-zero coefficients in $\sum_{i=p+1}^{q'}\partial_{k-1}(\lambda_i)$, that is, $\sum_{i=p+1}^{q'}\partial_{k-1}(\lambda_i)=\sum_{i=1}^{\ell}e_i$.
    From the observation that
    $$\sum_{i=1}^{p} \partial_{k-1}(\lambda_i)=\sum_{i=1}^{q'} \partial_{k-1}(\lambda_i)-\sum_{i=p+1}^{q'}\partial_{k-1}(\lambda_i)=\sum_{i=1}^{k}g_i-\sum_{i=p+1}^{q'}\partial_{k-1}(\lambda_i)$$
    and that $g_k$ is not a face of $Z$, we conclude that $g_k$ has a non-zero coefficient in $\sum_{i=p+1}^{q'}\partial_{k-1}(\lambda_i)$. That is, $g_k \in \{e_1,\ldots,e_\ell\}$. We may assume that $g_k = e_1$.
    Since $\partial_{k-2}(\sum_{i=1}^{\ell}e_i)=\partial_{k-2}(\sum_{i=p+1}^{q} \partial_{k-1}(\tau_i))=0$, we have $\partial_{k-2} (g_k)= \partial_{k-2}(e_1) =\partial_{k-2}(\sum_{i=2}^{\ell}e_i)$. 

    Since $\sum_{i=1}^{p}\partial_{k-1}(\lambda_i)=\sum_{i=1}^{k}g_i-\sum_{i=1}^{\ell}e_i = \sum_{i=1}^{k-1}g_i-\sum_{i=2}^{\ell}e_i$, and both $\sum_{i=1}^{p}\partial_{k-1}(\lambda_i)$ and $\sum_{i=1}^{k-1}g_i$ are $(k-2)$-chains of $Z$, $\sum_{i=2}^{\ell}e_i$ is also a $(k-2)$-chain of $Z$.
    Therefore, $\sum_{i=2}^{\ell}e_i$ is a $(k-2)$-chain of $Z$ such that its boundary is equal to $\partial_{k-2}(g_k)$.
    
    We next prove the following claims which is the last ingredient to apply the induction hypothesis to show the main statement.

\begin{claim}\label{claim:induc-step}
    Let $w\in \cup_{i=2}^{\ell}e_i$ be a vertex such that $(g_k \cup \{w\})\setminus\{v_i\}$ is a face of $Z$ for each $i \in [k-1]$. Then $g_i \cup \{w\} \in Y$ for each $i \in [k]$.
\end{claim}
\begin{proof}[Proof of Claim~\ref{claim:induc-step}]
    Since $w \in \cup_{i=2}^{\ell}e_i \subset \cup_{i=p+1}^{q'}\lambda_i$, we have $g_k\cup\{w\} \subset \sigma_{n(k)}$, so $g_k\cup\{w\} \in Y$.
    
    Now we show $g_i\cup\{w\} \in Z$ for each $i \in [k-1]$.
    Then we obtain that $g_i\cup\{w\} \in Y$ for each $i \in [k]$ as we wanted. 
    Let $\tilde{g_i}=g_k\cup\{w\}\setminus\{v_i\}$ and $\eta=\sum_{i=1}^{k-1}g_i+\sum_{i=1}^{k-1}\tilde{g_i}$.
    Note that $\partial_{k-2}(\sum_{i=1}^{k-1}g_i)=\partial_{k-2}(\sum_{i=1}^{k-1}\tilde{g_i})=\partial_{k-2}(g_k)$ since 
    both $(\sum_{i=1}^{k-1}g_i)+g_k$ and $(\sum_{i=1}^{k-1}\tilde{g_i})+g_k$ are isomorphic to $\partial\Delta(k)$.
    Then we have $\partial_{k-2}(\eta)=0$.
    Thus $\eta$ is a $(k-2)$-cycle of $Z[\{v_1,\dots,v_k,w\}]$.
    Since $L(Z) \le k-2$, there is a $(k-1)$-chain $\mu$ of $Z[\{v_1,\dots,v_k,w\}]$ such that $\partial_{k-1}(\mu)=\eta$.
    Since $g_k=\{v_1,\dots,v_{k-1}\}$ is not a face of $Z$, every $(k-1)$-dimensional face of $Z[\{v_1,\dots,v_k,w\}]$ is of the form $g_i\cup\{w\}$ for some $i \in [k-1]$.
    In order to have a $(k-1)$-chain $\mu$ such that $\partial_{k-1}(\mu)=\eta$, $g_i\cup\{w\}$ must be a face of $Z$ for every $i \in [k-1]$.
\renewcommand{\qedsymbol}{$\blacksquare$}
\end{proof}

    Now we prove that there is a vertex $w \in \bigcup_{i=1}^q \tau_i$ such that $g_i \cup \{w\} \in Y$ for each $i \in [k]$.
    We proceed by induction on $k$.
    Assume $k=3$.
    We first show that there is a vertex $w \in \cup_{i=2}^{\ell} e_i$ such that $\{v_1,w\}$ and $\{v_2,w\}$ are faces of $Z$.
    Since $Z$ has a $1$-chain $\sum_{i=2}^{\ell}e_i$ such that its boundary is equal to $\partial_{1}(g_3)=\partial_1(\{v_1,v_2\})$, the set of edges $\{e_2,\dots,e_{\ell}\}$ contains a path connecting $v_1$ and $v_2$.
    Thus we can take $\tilde{e}_1,\tilde{e}_2 \in \{e_2,\dots,e_{\ell}\}$ such that $\tilde{e_1}=\{v_1,w_1\}$ and $\tilde{e_2}=\{v_2,w_2\}$ for some $w_1,w_2 \in V(Z)\setminus\{v_1,v_2\}$.
    Note that $w_1$ and $w_2$ can be equal.
    Then $v_1,v_2,w_1,w_2 \in V(Z) \cap \sigma_{n(3)}$.
    Since $\prec$ is a weak shelling, there is a vertex $u \in V(Z) \cap \sigma_{n(3)}$ such that $(V(Z) \cap \sigma_{n(3)})\setminus\{u\}$ is a face of $Z$.
    Since $\{v_1,v_2\}$ is not a face of $Z$, $u$ is either $v_1$ or $v_2$.
    We may assume $u=v_1$, then we have $\{v_1,w_1\},\{v_2,w_1\} \in Z$.
    Here, $w_1$ is a vertex that we want.
    Note that $w_1 \in \cup_{i=2}^{\ell} e_i \subset \cup_{i=p+1}^{q'}\lambda_i \subset \cup_{i=1}^{q}\tau_i$.
    Since we have $\{v_1,w_1\}=(g_3\cup\{w_1\})\setminus\{v_2\}$ and $\{v_2,w_1\}=(g_3\cup\{w_1\})\setminus\{v_1\}$, Claim~\ref{claim:induc-step} implies that $g_i \cup \{w_1\} \in Y$ for each $i \in [3]$. 
    This proves the case when $k=3$.
    
    Now suppose $k \ge 4$. 
    Note that $L(Z)=k-2$ since $Z[g_k]$ is isomorphic to $\partial\Delta(k-1)$ and $\prec'$ is a weak shelling of $Z$ such that $M_{\prec'}(Z)=k-2$.
    Recall that $\partial_{k-2} (g_k)= \partial_{k-2}(e_1) =\partial_{k-2}(\sum_{i=2}^{\ell}e_i)$.
    By induction hypothesis, there is a vertex $w \in \cup_{i=2}^{\ell}e_i$ such that $g_k\cup\{w\}\setminus\{v_i\}$ is a face of $Z$ for each $i \in [k-1]$.
    Then by Claim~\ref{claim:induc-step}, $g_i \cup \{w\} \in Y$ for each $i \in [k]$.
    Since $w \in \cup_{i=2}^{\ell}e_i \subset \cup_{i=p+1}^{q'}\lambda_i \subset \cup_{i=1}^{q}\tau_i$, this completes the proof.
\end{proof}

\begin{proof}[Proof of Theorem~\ref{thm:eqaulity}]
    By Theorem~\ref{thm:main}, it is straightforward to see that the if part holds for each case.
    We will present proofs for the only if parts.
    
    Let $\sigma_1,\ldots,\sigma_m$ be the facets of $X$.
    Take a weak shelling $\prec:\sigma_1\prec\cdots\prec\sigma_m$ of $X$ such that $M_{\prec}(X)=M(X)$.
    First, note that $X$ is a simplex if and only if $L(X)=0$.
    If $X$ is a simplex, then $\tilde{H}_i(Y)=0$ for any induced subcomplex $Y$ and for any integer $i \ge 0$. Thus this implies that $L(X)=0$.
    If $X$ is not a simplex, then there are two vertices $v_1,v_2$ of $X$ such that $\{v_1,v_2\}$ is not a face of $X$. Then the induced subcomplex $X[\{v_1,v_2\}]$ has a non-vanishing homology group in dimension $0$, and hence $L(X) >0$.
    Thus we can observe that the assumption $L(X)=M(X)\ge 1$ implies that $X$ is not a simplex, i.e. $m \ge 2$.
    
    We proceed by induction on $m+L(X)$.
    If $L(X)=1$, it is trivial.
    If $m=2$, then the intersection of the two facets of $X$ must be a simplex, thus $M_{\ell}(X)=1$ for any linear order $\ell$ of the facets of $X$, implying $L(X)=M(X)=1$. 
    Now we may assume $m \ge 3$ and $L(X)=k \ge 2$.
    
    Let $X'=\cup_{i=1}^{m-1} \Delta(\sigma_i)$ and $\prec'$ be the induced order of $\prec$ on the facets of $X'$.
    We divide it into two cases.
    \begin{enumerate}
        \item[(i)] $X' \cap \Delta(\sigma_m)$ is a simplex.
        \item[(ii)] $X' \cap \Delta(\sigma_m)$ is not a simplex.
    \end{enumerate}
    
    In Case (i), by Lemma~\ref{lem:inductivestep}, we obtain 
    \[L(X)=L(X') \text{ and } M(X)=M_{\prec}(X)=M_{\prec'}(X')\ge M(X')\]
    On the other hand, we have $L(X') \le M(X')$ by Theorem~\ref{thm:main}, and hence the assumption $L(X)=M(X)$ implies 
    \[L(X')=L(X)=M(X)=M_{\prec}(X)=M_{\prec'}(X')=M(X').\]
    Since $\prec'$ is a weak shelling of $X'$ where $M_{\prec'}(X')=M(X')$ and $L(X')=M(X')$ hold, we can apply the induction hypothesis to $X'$.
    By applying Theorem~\ref{thm:mvseq} to the exact sequence \eqref{mvseq}, we have that the inclusion map $\iota: X'[W] \hookrightarrow X[W]$ induces an isomorphism $\iota_i^*: \tilde{H}_i(X'[W]) \to \tilde{H}_i(X[W])$ for every $W \subset V$ and for every integer $i$.
    Then this shows that $\tilde{\beta}_{k-1}(X[W])=\tilde{\beta}_{k-1}(X'[W]) \leq 1$ for every $W \subset V$.
    In addition, since $\tilde{H}_{k-1}(X'[W]) \neq 0$ for some $W \subset V$, then we also have $\tilde{H}_{k-1}(X[W]) \neq 0$.
    Now, suppose $\tilde{H}_{k-1}(X[W]) \neq 0$.
    Then we have $\tilde{H}_{k-1}(X'[W]) \neq 0$.
    Thus $X'[W]$ contains an induced subcomplex $Y$ such that $Y$ is an induced cycle if $k=2$ and $Y$ is isomorphic to $\partial \Delta(k+1)$ if $k \ge 3$ where $Y$ corresponds to a generator of $\tilde{H}_{k-1}(X'[W])$.
    Since $X'[W]$ is an induced subcomplex of $X[W]$, $Y$ is also an induced subcomplex of $X[W]$.
    Since the isomorphism $\iota_i^*: \tilde{H}_i(X'[W]) \to \tilde{H}_i(X[W])$ is induced from the inclusion map, this implies that $Y$ also corresponds to a generator of $\tilde{H}_{k-1}(X[W])$.
    Thus, this proves the theorem for Case (i).

    In Case (ii), by Lemma~\ref{lem:inductivestep}, we have
    \[L(X) \le L(X')+1 \text{ and } M(X)=M_{\prec}(X)=M_{\prec'}(X)+1 \ge M(X')+1.\]
    Thus we obtain $L(X)\le L(X')+1 \le M(X')+1 \le M(X)$.
    Then the assumption $L(X) = M(X)$ implies
    \[L(X')=L(X)-1=M(X)-1=M_{\prec}(X)-1=M_{\prec'}(X')=M(X').\]
    Since $\prec'$ is a weak shelling of $X'$ such that $M_{\prec'}(X')=M(X')$ and $L(X')=M(X')$, again, we can apply the induction hypothesis to $X'$.
    We consider the case $L(X) = 2$ and the case $L(X) \geq 3$ separately.
    
    Suppose $L(X)=2$, then it follows that $M(X) = 2$ and $L(X')=M(X')=1$.
    Take $W\subset V$ such that $\tilde{\beta}_1(X[W])\neq0$.
    We will show that $\tilde{\beta}_1(X[W])\leq 1$ and $X[W]$ contains an induced cycle, which corresponds to a generator of $\tilde{H}_1(X[W])$.
    By \eqref{mvseq}, we have the following exact sequence.
    \[0\to \tilde{H}_1(X[W]) \xrightarrow{\delta_1} \tilde{H}_0((X'\cap \Delta(\sigma_m))[W]) \rightarrow \tilde{H}_0(X'[W]) \to \cdots.\]
    Since $\delta_1$ is an injective map, we have $\tilde{\beta}_1(X[W]) \le \tilde{\beta}_0((X'\cap \Delta(\sigma_m))[W])$.
    Since $\prec$ is a weak shelling, $(X'\cap \Delta(\sigma_m))[W]$ has at most $2$ connected components, and hence we have $\tilde{\beta}_1(X[W]) \le \tilde{\beta}_0((X'\cap \Delta(\sigma_m))[W]) \le 1$.
    
    Now take $\alpha \in C_1(X[W])$ and a positive integer $s$ such that $[\alpha]$ is a generator of $\tilde{H}_1(X[W])$ and $s$ is the minimum where $\alpha=\sum_{i=1}^{s}\alpha_i$ for some $1$-dimensional faces $\alpha_i$.
    We claim that the vertex subset $\cup_{i=1}^s\alpha_i$ induces a cycle in $X[W]$.
    Let $G$ be the graph on $\cup_{i=1}^s\alpha_i$ where the edges are $\alpha_1,\dots,\alpha_s$.
    Since $\partial_1(\alpha)=0$, every vertex in $G$ has an even degree.
    It is a well-known fact that a graph where all vertices have even degree contains a cycle, thus $G$ can be decomposed into edge-disjoint union of cycles.
    Note that since $[\alpha]$ is a generator of $\tilde{H}_1(X[W])$, $\alpha$ is not an $1$-boundary of $X[W]$.
    If $G$ contains at least two edge-disjoint cycles, then at least one of them is not an $1$-boundary of $X[W]$: otherwise, $G$ itself is an $1$-boundary of $X[W]$, which is a contradiction to the assumption that $[\alpha]$ is a generator of $\tilde{H}_1(X[W])$.
    Such a cycle, say $\alpha'$, is a generator of $\tilde{H}_1(X[W])$. On the other hand, we can express $\alpha'$ as $\alpha'=\sum_{i=1}^{s'}\alpha_i'$ with $s'<s$, but this is a contradiction to the minimality of $s$.
    Thus $G$ must be a cycle.
    Now, suppose $\cup_{i=1}^s\alpha_i$ does not induce a cycle in $X[W]$. Then there is a chord of the cycle $G$ in $X[W]$.
    Then this chord divides the cycle $G$ into two smaller cycles $C_1$, $C_2$ such that $C_1$ and $C_2$ share only one edge, which is a chord of $G$.
    If both $C_1$ and $C_2$ are $1$-boundaries of $X[W]$, then $G$ is also an $1$-boundary of $X[W]$, which contradicts that $\alpha$ is a generator of $\tilde{H}_1(X[W])$.
    If one of $C_1$ and $C_2$ is not an $1$-boundary of $X[W]$, then such cycle will be a generator of $\tilde{H}_1(X[W])$ with a smaller size, and hence this also contradicts the choice of $\alpha$.
    Thus, we obtain that $G$ is an induced cycle of $X[W]$.
    Therefore, $X[W]$ contains an induced cycle, which corresponds to a generator of $\tilde{H}_1(X[W])$.

    Now we assume $L(X)=k \ge 3$. Then $L(X')=k-1$.
    Take $W \subset V$ such that $\tilde{\beta}_{k-1}(X[W]) \neq 0$.
    We will show that $\tilde{\beta}_{k-1}(X[W]) \le 1$ and $X[W]$ contains an induced subcomplex isomorphic to $\partial \Delta(k+1)$, which corresponds to a generator of $\tilde{\beta}_{k-1}(X[W])$.
    By \eqref{mvseq}, we have the following exact sequence.
    \[0\to \tilde{H}_{k-1}(X[W]) \xrightarrow{\delta_{k-1}} \tilde{H}_{k-2}((X'\cap \Delta(\sigma_m))[W]) \rightarrow \tilde{H}_{k-2}(X'[W]) \to \cdots.\]
    Since $\delta_{k-1}$ is an injective map, we have $\tilde{\beta}_{k-1}(X[W]) \leq \tilde{\beta}_{k-2}((X'\cap \Delta(\sigma_m))[W])$.
    Since $\tilde{\beta}_{k-1}(X[W]) \neq 0$, we have $\tilde{\beta}_{k-2}((X'\cap \Delta(\sigma_m))[W]) \neq 0$.
    Note that $(X'\cap \Delta(\sigma_m))[W]$ is an induced subcomplex of $X'$.
    Then by the induction hypothesis, we have $\tilde{\beta}_{k-1}(X[W]) \leq \tilde{\beta}_{k-2}((X'\cap \Delta(\sigma_m))[W])\leq 1$.
    In addition, there is an induced subcomplex $Y(k)$ of $(X'\cap \Delta(\sigma_m))[W]$ that corresponds to a generator of $\tilde{H}_{k-2}((X'\cap \Delta(\sigma_m))[W])$ and $Y(k)$ is an induced cycle if $k=3$ and $Y(k) \cong \partial\Delta(k)$ if $k \ge 4$.
    From the definition of weak shelling, $(X'\cap \Delta(\sigma_m))[W]$ cannot contain an induced cycle of length at least $4$.
    Thus we can conclude that $Y(k) \cong \partial\Delta(k)$ for $k \ge 3$.
    Let $v_1,\dots,v_k$ be the vertices of $Y(k)$, and let $g_i=\{v_1,\dots,v_k\}\setminus\{v_i\}$ for each $i \in [k]$. Then $g_1,\dots,g_k$ are the facets of $Y(k)$.
    
    Now take a generator $[\alpha]$ of $\tilde{H}_{k-1}(X[W])$, where $\alpha \in C_{k-1}(X[W])$.
    Then $\delta_{k-1}([\alpha])$ is a generator of $\tilde{H}_{k-2}((X'\cap \Delta(\sigma_m))[W])$.
    Since $X[W]=X'[W] \cup \Delta(\sigma_m)[W]$, $\alpha=\alpha_1+\alpha_2$ for some $\alpha_1 \in C_{k-1}(X'[W])$, $\alpha_2 \in C_{k-1}(\Delta(\sigma_m)[W])$.
    Then we have $\delta_{k-1}([\alpha])=[\partial_{k-1}(\alpha_1)]=[\partial_{k-1}(\alpha_2)]$ by Theorem~\ref{thm:mvseq}.
    
    Let $g=\sum_{i=1}^{k}g_i \in C_{k-2}((X'\cap \Delta(\sigma_m))[W])$.
    We claim that $g \in \partial_{k-1}(C_{k-1}(X'[W]))$.
    Note that $[g]$ is a generator of $\tilde{H}_{k-2}((X'\cap \Delta(\sigma_m))[W])$.
    Since we know $\tilde{\beta}_{k-2}((X'\cap \Delta(\sigma_m))[W])=1$, it must be $[g]=[\partial_{k-1}(\alpha_1)]$.
    This implies that $g-\partial_{k-1}(\alpha_1)=\partial_{k-1}(\gamma)$ for some $\gamma \in C_{k-1}((X'\cap \Delta(\sigma_m)[W]))$.
    Thus $g=\partial_{k-1}(\alpha_1+\gamma)$ and $\alpha_1+\gamma \in C_{k-1}(X'[W])$.

    Now we will show that there is a vertex $w \in W\setminus \sigma_m$ such that $g_i \cup\{w\} \in X'[W]$ for each $i \in [k]$.
    Then $\{v_1,\dots,v_k,w\}$ induces a subcomplex isomorphic to $\partial\Delta(k+1)$ in $X[W]$.
    By Lemma~\ref{lem:inducedorder}, we have a weak shelling $\prec'_W$ of $X'[W]$ such that $M_{\prec'_W}(X'[W]) \le M_{\prec'}(X')=k-1$.
    Then by Theorem~\ref{thm:main}, we have $L(X'[W]) \le M_{\prec'_W}(X'[W]) \le k-1$.
    Since $X'[W]$ contains an induced subcomplex $(X'\cap \Delta(\sigma_m))[W]$ such that $\tilde{H}_{k-2}((X'\cap \Delta(\sigma_m))[W]) \neq 0$, we obtain that $L(X'[W])=M_{\prec'_W}(X'[W])=k-1$.
    Then, by Lemma~\ref{lem:commonvertex}, there is a vertex $w \in W$ such that $g_i\cup\{w\} \in X'[W]$ for each $i \in [k]$.
    Now, we need to show $w \not\in \sigma_m$.
    Assume $w \in \sigma_m$.
    Then $g_i \cup \{w\} \in (X'\cap \Delta(\sigma_m))[W]$ for each $i \in [k]$.
    Since we know $\partial_{k-1}(\sum_{i=1}^k (g_i\cup\{w\}))=g$, $[g]$ cannot be a generator of $\tilde{H}_{k-2}((X'\cap \Delta(\sigma_m))[W]$, which is a contradiction.
    Therefore, there is a vertex $w \in W \setminus \sigma_m$ such that $g_i \cup\{w\} \in X'[W]$ for each $i \in [k]$ and $\{v_1,\dots,v_k,w\}$ induces a subcomplex isomorphic to $\partial\Delta(k+1)$ in $X[W]$.

    Let $\delta=\{v_1,\dots,v_k\}$ and $\tilde{g}=\sum_{i=1}^k (g_i\cup\{w\})+\delta \in C_{k-1}(X[W])$.
    Note that $\partial_{k-1}(\tilde{g})=0$.
    Now we show that $[\tilde{g}]$ is a generator of $\tilde{H}_{k-1}(X[W])$, and this will complete the proof.
    Suppose $[\tilde{g}]$ is not a generator of $\tilde{H}_{k-1}(X[W])$.
    Then there is a $k$-chain $\tau$ of $X[W]$ such that $\partial_k(\tau)=\tilde{g}$.
    By Lemma~\ref{lem:inducedorder}, we have a weak shelling $\prec_W$ of $X[W]$ such that $M_{\prec_W}(X[W]) \le M_{\prec}(X) \le k$.
    By Theorem~\ref{thm:main}, we have $L(X[W]) \le M_{\prec_W}(X[W])=k$.
    Since we know $\tilde{H}_{k-1}(X[W]) \neq 0$, we obtain $L(X[W])=M_{\prec_W}(X[W])=k$.
    Then by Lemma~\ref{lem:commonvertex}, there is a vertex $u \in W$ such that $g_i\cup\{w,u\},\delta\cup\{u\} \in X[W]$ for each $i \in [k]$.
    Since $\delta$ is not a face of $X'[W]$, $\delta \not\subset \sigma_i \cap W$ for all $i<m$.
    Thus we have $\delta \cup\{u\} \subset \sigma_m\cap W$.
    Since $w\notin\sigma_m$, it must be $g_i\cup\{w,u\} \not\subset \sigma_m$ for each $i \in [k]$.
    Thus $g_i \cup \{w,u\} \in X'[W]$ for each $i \in [k]$.
    Then $(X'\cap\Delta(\sigma_m))[W]$ contains $g_i\cup\{u\}$ for each $i \in [k]$.
    Since $\partial_{k-1}(\sum_{i=1}^k(g_i\cup\{u\}))=g$, this is a contradiction that $[g]$ is a generator of $\tilde{H}_{k-2}((X'\cap\sigma_m)[W])$.
    Therefore, $[\tilde{g}]$ is a generator of $\tilde{H}_{k-1}(X[W])$ as we wanted.
\end{proof}

\begin{remark}
    When a simplicial complex $X$ is pure and strongly connected, then $X$ always admits a weak shelling $\prec$ where $M_\prec(X) = M(X)$.
    We also notice that there is a pure and strongly connected complex $X$ with $L(X) = M(X)$, thus satisfies Theorem~\ref{thm:eqaulity} (3), but has a more complicated structure than expected: see \cite[Section 4]{MR1838920}.
    
    Let $X$ be a simplicial complex on $\{1,2,\ldots,8\}$ with seven facets \[\{1235, 1356, 1346, 1467, 1247, 3468, 2348\}.\]
    Clearly, $X$ is $3$-pure and the ordering \[1235 \prec 1356 \prec 1346 \prec 1467 \prec 1247 \prec 3468 \prec 2348\]
    confirms that $X$ is strongly connected.
    It is easy to check that $M(X) = M_\prec(X) = 3 = L(X)$.
    However, this does not contain the join of a vertex and the boundary of a simplex on $4$ vertices as an induced subcomplex: the only induced subcomplex that is isomorphic to the boundary of a simplex on $4$ vertices is $X[\{1,2,3,4\}]$, but none of $1245, 1236, 2347, 1238$ is a face of $X$.
\end{remark}

\section{Weak Eisenbud--Goto inequality for Stanley--Reisner ideals}\label{sec:rmk}
Terai showed in \cite{MR1838920} that if the Stanley--Reisner complex $X$ of a square-free monomial ideal $I$ is pure and strongly connected, then 
\[
\reg(I) = L(X)+1 \le \deg(k[X]) - \codim(k[X])+1.
\]
It has the same form as the well-known Eisenbud--Goto conjecture, which gives an upper bound for the Castelnuovo--Mumford regularity \cite{MR741934}.
We remark that finding a slightly weaker upper bound for the Castelnuovo--Mumford regularity of the form $(\deg - \codim + 1 + c)$ for some $c$ is still meaningful, see \cite{MR2601633}, \cite{MR1620706}, \cite{MR1774091} for more discussion and applications. 
In this section, we relate our main result in \Cref{Sec:main} with such a direction.

Let $X$ be a simplicial complex with a linear order of the facets $\prec: \sigma_1 \prec \dots \prec \sigma_m$. 
We define the \textit{connectivity} of $\sigma_j$ with respect to $\prec$ by the maximum number of vertices in $\sigma_j$ that are shared with any of the facets $\sigma_i$ for $i<j$. 
We denote this quantity by $\conn_{\prec} \sigma_j$, and it is given as follows: $\conn_{\prec} \sigma_j := \max\{|\sigma_i \cap \sigma_j|:i<j\}$ for $j = 2, \dots, m$.

\begin{lemma}\label{lem:conn}
    Let $X$ be a simplicial complex with a linear order of the facets $\prec: \sigma_1 \prec \dots \prec \sigma_m$.
    Then,
    \begin{align}\label{WEGinequ}
        M_{\prec}(X) \le m - |V(X)|+ |\sigma_1|+\sum_{j=2}^m (\dim\sigma_j - \conn_{\prec} \sigma_j).
    \end{align}
    \end{lemma}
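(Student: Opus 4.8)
The plan is to prove the inequality \emph{step by step} over the $m-1$ steps $j=2,\dots,m$ of the inductive definition of $M_\prec$, where as in Section~\ref{Sec:main} we write $X_j:=\bigcup_{i=1}^{j}\Delta(\sigma_i)$. Recall that $M_\prec(X_1)=1$ and that step $j$ increases the value by $1$ precisely when $X_{j-1}\cap\Delta(\sigma_j)$ is not a simplex, and by $0$ otherwise; hence
\[
M_\prec(X)=1+\#\bigl\{\,j\in\{2,\dots,m\}:X_{j-1}\cap\Delta(\sigma_j)\text{ is not a simplex}\,\bigr\}.
\]

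First I would put the right-hand side of \eqref{WEGinequ} in a matching form. Writing $V_j:=V(X_j)$ and using the telescoping identity $|V(X)|=|\sigma_1|+\sum_{j=2}^{m}\bigl(|\sigma_j|-|\sigma_j\cap V_{j-1}|\bigr)$ together with $\dim\sigma_j=|\sigma_j|-1$, a short computation collapses the right-hand side of \eqref{WEGinequ} to $1+\sum_{j=2}^{m}c_j$, where $c_j:=|\sigma_j\cap V_{j-1}|-\conn_\prec\sigma_j$. Thus it suffices to prove, for each $j\in\{2,\dots,m\}$, that the contribution of step $j$ to $M_\prec(X)$ — which is $0$ or $1$ — is at most $c_j$.

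The key combinatorial input is that, regarded as a subcomplex of $\Delta(\sigma_j)$, one has $X_{j-1}\cap\Delta(\sigma_j)=\bigcup_{i<j}\Delta(\sigma_i\cap\sigma_j)$, since a subset of $\sigma_j$ is a face of $X_{j-1}$ exactly when it is contained in some $\sigma_i$ with $i<j$. A union of simplices $\bigcup_{i}\Delta(A_i)$ over a nonempty index set is itself a simplex if and only if one $A_{i_0}$ contains all the others, because the vertex set $\bigcup_i A_i$ of the union must then itself be a face. Moreover the vertex set of $X_{j-1}\cap\Delta(\sigma_j)$ is $\sigma_j\cap V_{j-1}=\bigcup_{i<j}(\sigma_i\cap\sigma_j)$, so $|\sigma_j\cap V_{j-1}|\ge\max_{i<j}|\sigma_i\cap\sigma_j|=\conn_\prec\sigma_j$; in particular $c_j\ge0$, which already handles every step where $X_{j-1}\cap\Delta(\sigma_j)$ is a simplex. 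If $X_{j-1}\cap\Delta(\sigma_j)$ is \emph{not} a simplex, pick $i_0<j$ with $|\sigma_{i_0}\cap\sigma_j|=\conn_\prec\sigma_j$; by the criterion above there are $i<j$ and a vertex $v\in(\sigma_i\cap\sigma_j)\setminus(\sigma_{i_0}\cap\sigma_j)$, so $(\sigma_{i_0}\cap\sigma_j)\cup\{v\}\subseteq\sigma_j\cap V_{j-1}$ gives $c_j\ge1$, matching the contribution of such a step. Summing these step-wise inequalities over $j$ yields \eqref{WEGinequ}.

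The argument is essentially bookkeeping once these two ingredients are in place; the only points requiring care are the degenerate cases (e.g.\ $\sigma_j\cap V_{j-1}=\emptyset$, so that $X_{j-1}\cap\Delta(\sigma_j)=\{\emptyset\}$ is the simplex on no vertices) and making sure the telescoping identity for $|V(X)|$ and the substitution $\dim\sigma_j=|\sigma_j|-1$ are carried out without sign errors.
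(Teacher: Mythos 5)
Your proof is correct and follows essentially the same route as the paper's sketch: an induction (here unwound into a step-by-step telescoping comparison) over the facets in the order $\prec$, with the quantity $c_j=|\sigma_j\cap V_{j-1}|-\conn_\prec\sigma_j$ playing exactly the role of the paper's increment $|V(X'\cap\Delta(\sigma_m))|-\conn_\prec\sigma_m$. In fact you supply a detail the paper's sketch leaves implicit, namely the verification that $c_j\ge1$ whenever $X_{j-1}\cap\Delta(\sigma_j)$ is not a simplex.
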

\begin{proof}[Sketch of the proof]
The idea is essentially the same as the proof of Lemma~\ref{lem:inductivestep}.

Let the right-hand side of the inequality \eqref{WEGinequ} denote by $N_{\prec}(X)$.
We use the induction on the number of facets $m$.
It is obvious when $m=1$.
Assume that $m \ge 2$ and $M_{\prec}(X') \le N_{\prec}(X')$ where $X' = \cup_{i=1}^{m-1} \Delta(\sigma_i)$.
We have
$$N_{\prec}(X) = N_{\prec}(X')+|V(X'\cap \Delta(\sigma_m))|-\conn_{\prec}\sigma_m.$$

If $X' \cap \Delta(\sigma_m)$ is a simplex, then
\[M_{\prec}(X)=M_{\prec}(X') \le N_{\prec}(X')=N_{\prec}(X).\]
If $X' \cap \Delta(\sigma_m)$ is not a simplex, then
$$M_{\prec}(X) \le M_{\prec}(X') + 1 \le N_{\prec}(X') + 1 \le N_{\prec}(X).$$
Therefore, we have $M_{\prec}(X) \le N_{\prec}(X)$ in both cases.
\end{proof}

Let $\alpha(X)$ denote the number of facets in $X$ whose dimension is strictly less than the dimension of $X$, and let 
$\gamma_{\prec}(X)=\sum_{j=2}^{m}(\dim \sigma_j -\conn_{\prec} \sigma_j)$ for an ordering $\prec:\sigma_1 \prec \cdots \prec \sigma_m$ of facets of $X$ and $\gamma(X)$ be the minimum value of $\gamma_{\prec}(X)$ over all facet orderings $\prec$ of $X$.

\begin{corollary}[Weak Eisenbud--Goto inequality for Stanley--Reisner ideals]\label{cor:weakEG}
Let $X$ be the Stanley--Reisner complex of a square-free monomial ideal $I$.
Then,
\[
\reg(I)\le \deg(k[X]) - \codim(k[X]) + 1 + \alpha(X) + \gamma(X)
\]
\end{corollary}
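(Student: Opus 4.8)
The plan is to chain together everything already in place: the identity $\reg(I)=L(X)+1$ recorded in Section~\ref{sec:prem}, the inequality $L(X)\le M(X)$ of Theorem~\ref{thm:main}, the estimate of Lemma~\ref{lem:conn} applied to a well-chosen facet order, and the dictionary $\deg(k[X])=f_{d-1}(X)$, $\codim(k[X])=n-d$ translating the combinatorics of $X$ into the algebra of $k[X]$. Once the right order is fixed, the rest is bookkeeping.

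First I would set up notation: write $n=|V(X)|$, let $d$ be such that $\dim X=d-1$, and let $m$ be the number of facets of $X$. Since any face of $X$ of dimension $d-1$ is automatically a facet (there is nothing of larger dimension to contain it), the facets of $X$ split into the $f_{d-1}(X)=\deg(k[X])$ facets of top dimension and the $\alpha(X)$ facets of strictly smaller dimension, so $m=\deg(k[X])+\alpha(X)$; and $\codim(k[X])=n-d$ gives $n=\codim(k[X])+d$. Next, I would choose a linear order $\prec:\sigma_1\prec\cdots\prec\sigma_m$ of the facets of $X$ realizing $\gamma_{\prec}(X)=\gamma(X)$. Then $M(X)\le M_{\prec}(X)$ by the definition of $M(X)$, and Lemma~\ref{lem:conn} yields
\[
M(X)\le M_{\prec}(X)\le m-n+|\sigma_1|+\gamma(X).
\]
Since $\sigma_1$ is a face of $X$ we have $|\sigma_1|=\dim\sigma_1+1\le\dim X+1=d$, and substituting $m=\deg(k[X])+\alpha(X)$, $n=\codim(k[X])+d$ and cancelling the two copies of $d$ gives
\[
M(X)\le\deg(k[X])-\codim(k[X])+\alpha(X)+\gamma(X).
\]
Finally, $\reg(I)=L(X)+1\le M(X)+1$ by Theorem~\ref{thm:main}, and combining the last two displays gives the claimed inequality.

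I expect no genuine obstacle here; the whole argument is an assembly of named results. The one point deserving attention is the translation of the quantities $m$, $n=|V(X)|$, and $|\sigma_1|$ appearing in Lemma~\ref{lem:conn} into $\deg(k[X])$, $\codim(k[X])$, $\alpha(X)$, $\gamma(X)$. In particular the inequality $|\sigma_1|\le\dim X+1$ is precisely the slack needed for the two occurrences of $d=\dim X+1$ to cancel, so one need not even arrange that $\sigma_1$ have top dimension. All identities used are those recorded in Section~\ref{sec:prem}, under the standing convention that $n$ is simultaneously the number of variables of $S$ and the number of vertices of $X$.
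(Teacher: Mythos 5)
Your proposal is correct and follows essentially the same route as the paper: choose a facet ordering $\prec$ minimizing $\gamma_{\prec}(X)$, apply Lemma~\ref{lem:conn} together with $L(X)\le M(X)\le M_{\prec}(X)$, and translate $m$, $|V(X)|$, and $|\sigma_1|$ into $\deg(k[X])$, $\codim(k[X])$, $\alpha(X)$ via the dictionary in Section~\ref{sec:prem}. The paper likewise uses $|\sigma_1|\le\dim X+1$ rather than insisting $\sigma_1$ be top-dimensional, so the bookkeeping matches yours exactly.
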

\begin{proof}
Let $m$ be the number of facets of $X$.
Recall that $L(X) = \reg(I)-1$, $\deg(k[X])$ equals the number of facets whose dimension is $\dim X$, and $\codim(k[X]) = |V(X)| - \dim X - 1$.
In particular, $\alpha( X)=m-\deg(k[ X])$.
Let $\prec: \sigma_1 \prec \cdots \prec \sigma_m$ be a linear ordering of the facets of $X$ such that $\gamma_{\prec}(X)=\gamma(X)$.
Since we have $|\sigma_1| \le \dim X +1$, Lemma \ref{lem:conn} implies
$$L( X) \le \deg(k[ X]) - \codim(k[ X]) + \alpha( X) + \gamma( X)$$ as desired.
\end{proof}

Note that $X$ is pure and strongly connected if and only if $\alpha(X)=\gamma(X)=0$. Thus Corollary \ref{cor:weakEG} generalizes Terai's results in \cite{MR1838920}. 

\begin{example}
    Let $J = (x_0^2,x_1^2,x_0x_2+x_1x_3)$ be an ideal in $S = k[x_0,\dots,x_3]$ and $I\subset T = k[z_{00}, z_{01}, z_{10}, z_{11}, z_{20}, z_{21}, z_{30}]$ be the polarization of the generic initial ideal of $J$ with respect to the reverse lexicographical ordering.

    The Stanley--Reisner complex $ X$ of $I$ consists of $4$ facets $\sigma_1 = \{z_{00},z_{11},z_{20}\}$, $\sigma_2 = \{z_{00},z_{11},z_{21},z_{30}\}$, $\sigma_3 = \{z_{01},z_{11},z_{20},z_{21},z_{30}\}$, $\sigma_4 = \{z_{01},z_{10},z_{20},z_{21},z_{30}\}$.
    
    One can compute numerical information of the ideals as follows:
    \begin{itemize}
        \item $\reg J = \reg I = 3$, i.e., $L( X) = 2$.
        \item $\deg S/J = \deg T/I = 2$.
        \item $\codim S/J = \codim T/I = 2$.
    \end{itemize}   
    The ideal $J$ does not satisfy the Eisenbud--Goto inequality: $$\deg S/J - \codim S/J + 1 = 1 < 3 = \reg J.$$
    On the other hand, by Corollary \ref{cor:weakEG}, we have
    \[\reg I \le \deg(k[ X]) - \codim(k[ X]) + 1 + \alpha( X) + \gamma( X) =3 \]
    with the facet ordering $\sigma_4  \prec \sigma_3 \prec \sigma_2 \prec \sigma_1$.
    This gives the tight bound on the regularity of $I$.
\end{example}

The connectivity of a simplicial complex $X$ is also related to the regularity of a coordinate subspace arrangement, by considering the weighted Euler characteristic of a weighted graph associated with $X$. 
We discuss a reformulation of Corollary~\ref{cor:weakEG} in this direction.
We recall the setting of \cite[Section 5]{MR2275024}.

For a simplicial complex $X$ with the facets $\sigma_1,\ldots,\sigma_m$, we define a complete graph $G=(V(G), E(G))$ with weights on the vertices and the edges as follows: 
Let $V(G)=[m]$ with weight $w(i) = |\sigma_i|$ for $i=1,\dots,m$. 
Also, let $E(G)={[m] \choose 2}$ with weight $w(\{i,j\})=|\sigma_i\cap\sigma_j|$ for $1\le i < j \le m$.
The \textit{weighted Euler characteristic} $\chi_w(G)$ of $G$ is the sum of the weights of the vertices in $G$ minus the sum of the weights of the edges in $G$, i.e., $\chi_w(G):= \sum_{v\in V(G)} w(v) - \sum_{e\in E(G)} w(e)$.

\begin{corollary}\label{cor:Weakeular}
    Let $X$ be a simplicial complex with a facet ordering $\prec:\sigma_1 \prec \cdots \prec \sigma_m$.
    Suppose $G$ is the weighted graph of $X$ constructed as above.
    Then there is a spanning tree $F$ of $G$ such that
    \[
    N_{\prec}(X)= \chi_w(F) - |V(X)| + 1.
    \]
\end{corollary}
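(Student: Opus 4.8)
The plan is to recognize the quantity $N_{\prec}(X)$ from Lemma~\ref{lem:conn} as the weighted Euler characteristic of a suitably chosen spanning tree of $G$, built from the order $\prec$. Recall that
\[
N_{\prec}(X) = m - |V(X)| + |\sigma_1| + \sum_{j=2}^{m}\bigl(\dim\sigma_j - \conn_{\prec}\sigma_j\bigr)
= m - |V(X)| + |\sigma_1| + \sum_{j=2}^{m}\bigl(|\sigma_j| - 1 - \conn_{\prec}\sigma_j\bigr).
\]
The key observation is that $\conn_{\prec}\sigma_j = \max\{|\sigma_i\cap\sigma_j| : i<j\}$ is exactly the weight $w(\{p(j),j\})$ of the edge joining $j$ to a vertex $p(j)<j$ realizing this maximum. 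So first I would define, for each $j=2,\dots,m$, a parent $p(j) \in \{1,\dots,j-1\}$ with $|\sigma_{p(j)}\cap\sigma_j| = \conn_{\prec}\sigma_j$, and let $F$ be the graph on $V(G)=[m]$ with edge set $\{\{p(j),j\} : j=2,\dots,m\}$. Since every vertex $j\ge 2$ has exactly one edge going to a strictly smaller index, $F$ is connected and has $m-1$ edges, hence is a spanning tree of $G$.

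Next I would compute $\chi_w(F)$ directly from its definition. The vertex weights contribute $\sum_{i=1}^m w(i) = \sum_{i=1}^m |\sigma_i|$, and the edge weights contribute $\sum_{j=2}^m w(\{p(j),j\}) = \sum_{j=2}^m \conn_{\prec}\sigma_j$. Therefore
\[
\chi_w(F) = \sum_{i=1}^m |\sigma_i| - \sum_{j=2}^m \conn_{\prec}\sigma_j
= |\sigma_1| + \sum_{j=2}^m |\sigma_j| - \sum_{j=2}^m \conn_{\prec}\sigma_j.
\]
It then remains to match this with $N_{\prec}(X) + |V(X)| - 1$. Indeed,
\[
N_{\prec}(X) + |V(X)| - 1 = m - 1 + |\sigma_1| + \sum_{j=2}^m\bigl(|\sigma_j| - 1 - \conn_{\prec}\sigma_j\bigr)
= |\sigma_1| + \sum_{j=2}^m |\sigma_j| - \sum_{j=2}^m \conn_{\prec}\sigma_j,
\]
where the $(m-1)$ terms cancel against the $\sum_{j=2}^m(-1)$. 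This equals $\chi_w(F)$, giving $N_{\prec}(X) = \chi_w(F) - |V(X)| + 1$, as desired.

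There is no real obstacle here; the statement is essentially a bookkeeping identity once the right spanning tree is identified. The only point requiring a word of care is the well-definedness of $F$ as a tree: one must check that selecting, for each $j\ge 2$, a single parent among smaller indices yields a connected acyclic graph — this follows because the parent map strictly decreases the index, so iterating it from any vertex reaches $1$, and the $m-1$ chosen edges on $m$ vertices with connectivity force a tree. (If desired, one could instead phrase $F$ as the tree obtained by processing $\sigma_2,\dots,\sigma_m$ in order and attaching each $j$ to a previously-processed vertex of maximal intersection, which is manifestly a spanning tree.)
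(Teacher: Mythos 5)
Your proof is correct and follows essentially the same approach as the paper: both construct a spanning tree by attaching each vertex $j\ge 2$ to an earlier vertex realizing $\conn_{\prec}\sigma_j$, and then verify the identity by direct computation. The only cosmetic difference is that the paper breaks ties by choosing the smallest such parent index, which does not affect the weight calculation.
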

\begin{proof}
We inductively construct a tree $F$ on $[m]$.
Start with a graph with no edges, and for each $i \in \{2,3,\ldots,m\}$ add an edge that connects the vertices $i$ and $j$ with weight $|\sigma_j \cap \sigma_i|$ where $j <i$ is the minimum index among those with $|\sigma_j \cap \sigma_i| = \max_{t < i}|\sigma_t \cap \sigma_i|$. This obviously gives a spanning tree of $G$.

Now the statement immediately follows from
    \begin{align*}
        N_{\prec}(X) + |V(X)| & = m +|\sigma_1| +\sum_{j = 2}^{m} (\dim\sigma_j-\conn_<\sigma_j) \\
         & = \sum_{i = 1}^m |\sigma_i| - \sum_{j = 2}^m \max_{k<j}\{\sigma_k\cap\sigma_j\} + 1 \\
         &= \sum_{v\in V(F)} w(v) - \sum_{e\in E(F)} w(e) + 1 \\
         & = \chi_w(F) +1.\qedhere
    \end{align*}
\end{proof}

The authors of \cite{MR2275024} showed that the Stanley--Reisner ideal $I_X$ of $X$ is $2$-regular if and only if there is a spanning tree $F$ of $G$ such that $\chi_w(F)=|V(X)|$.
Corollary~\ref{cor:Weakeular} describes what happens for ideals with higher regularity. 

We summarize the series of inequalities as follows:
$$L(X)\le M(X) \le N_{\prec}(X).$$
Note again that, if $X$ is a $1$-Leray complex, all quantities in the inequality are equal to each other.
We conclude this section with an example where the equalities do not hold.

\begin{example}
    Let $X=\Delta(\{1,2,4\})\cup \Delta(\{1,3,5\})\cup \Delta(\{2,3,6\})\cup \Delta(\{4,5,6\})$ be the complex in \Cref{eg:SRcorr}.
    In this case, we have $$L(X) = 2 < M(X) = 3 < N_\prec(X) = 4$$
    for any ordering $\prec$ of the facets.  
\end{example}

\section*{Acknowledgement}
J.~Jung was supported by the Institute for Basic Science (IBS-R032-D1-2023-a00).
J.~Kim was supported by the Institute for Basic Science (IBS-R029-Y5).
M.~Kim was supported by Basic Science Research Program of the National Research Foundation of Korea (NRF) funded by the Ministry of Education (NRF-2022R1F1A1063424) and by GIST Research Project grant funded by the GIST in 2023. Y. ~Kim was supported by the Basic Science Program of the NRF of Korea (NRF-2022R1C1C1010052).

\bibliographystyle{acm}
\bibliography{reference.bib}

\begin{thebibliography}{10}

\bibitem{MR894583}
{\sc Bayer, D., and Stillman, M.}
\newblock A theorem on refining division orders by the reverse lexicographic
  order.
\newblock {\em Duke Math. J. 55}, 2 (1987), 321--328.

\bibitem{MR2601633}
{\sc Beheshti, R., and Eisenbud, D.}
\newblock Fibers of generic projections.
\newblock {\em Compos. Math. 146}, 2 (2010), 435--456.

\bibitem{MR3070118}
{\sc Dao, H., Huneke, C., and Schweig, J.}
\newblock Bounds on the regularity and projective dimension of ideals
  associated to graphs.
\newblock {\em J. Algebraic Combin. 38}, 1 (2013), 37--55.

\bibitem{MR741934}
{\sc Eisenbud, D., and Goto, S.}
\newblock Linear free resolutions and minimal multiplicity.
\newblock {\em J. Algebra 88}, 1 (1984), 89--133.

\bibitem{MR2275024}
{\sc Eisenbud, D., Green, M., Hulek, K., and Popescu, S.}
\newblock Small schemes and varieties of minimal degree.
\newblock {\em Amer. J. Math. 128}, 6 (2006), 1363--1389.

\bibitem{MR4603825}
{\sc Fang, L., Huang, H., Pach, J., Tardos, G., and Zuo, J.}
\newblock Successive vertex orderings of fully regular graphs.
\newblock {\em J. Combin. Theory Ser. A 199\/} (2023), Paper No. 105776, 14.

\bibitem{MR3213521}
{\sc Francisco, C.~A., Mermin, J., and Schweig, J.}
\newblock A survey of {S}tanley-{R}eisner theory.
\newblock In {\em Connections between algebra, combinatorics, and geometry},
  vol.~76 of {\em Springer Proc. Math. Stat.} Springer, New York, 2014,
  pp.~209--234.

\bibitem{MR3213523}
{\sc H\`a, H.~T.}
\newblock Regularity of squarefree monomial ideals.
\newblock In {\em Connections between algebra, combinatorics, and geometry},
  vol.~76 of {\em Springer Proc. Math. Stat.} Springer, New York, 2014,
  pp.~251--276.

\bibitem{MR2724673}
{\sc Herzog, J., and Hibi, T.}
\newblock {\em Monomial ideals}, vol.~260 of {\em Graduate Texts in
  Mathematics}.
\newblock Springer-Verlag London, Ltd., London, 2011.

\bibitem{MR0441987}
{\sc Hochster, M.}
\newblock Cohen-{M}acaulay rings, combinatorics, and simplicial complexes.
\newblock In {\em Ring theory, {II} ({P}roc. {S}econd {C}onf., {U}niv.
  {O}klahoma, {N}orman, {O}kla., 1975)}, vol.~Vol. 26 of {\em Lecture Notes in
  Pure and Appl. Math}. Dekker, New York, 1977, pp.~pp 171--223.

\bibitem{KM06}
{\sc Kalai, G., and Meshulam, R.}
\newblock Intersections of {L}eray complexes and regularity of monomial ideals.
\newblock {\em J. Combin. Theory Ser. A 113}, 7 (2006), 1586--1592.

\bibitem{MR1620706}
{\sc Kwak, S.}
\newblock Castelnuovo regularity for smooth subvarieties of dimensions {$3$}
  and {$4$}.
\newblock {\em J. Algebraic Geom. 7}, 1 (1998), 195--206.

\bibitem{MR1774091}
{\sc Kwak, S.}
\newblock Generic projections, the equations defining projective varieties and
  {C}astelnuovo regularity.
\newblock {\em Math. Z. 234}, 3 (2000), 413--434.

\bibitem{MR3758150}
{\sc McCullough, J., and Peeva, I.}
\newblock Counterexamples to the {E}isenbud-{G}oto regularity conjecture.
\newblock {\em J. Amer. Math. Soc. 31}, 2 (2018), 473--496.

\bibitem{MR2110098}
{\sc Miller, E., and Sturmfels, B.}
\newblock {\em Combinatorial commutative algebra}, vol.~227 of {\em Graduate
  Texts in Mathematics}.
\newblock Springer-Verlag, New York, 2005.

\bibitem{MR2932582}
{\sc Morey, S., and Villarreal, R.~H.}
\newblock Edge ideals: algebraic and combinatorial properties.
\newblock In {\em Progress in commutative algebra 1}. de Gruyter, Berlin, 2012,
  pp.~85--126.

\bibitem{MR1453579}
{\sc Stanley, R.~P.}
\newblock {\em Combinatorics and commutative algebra}, second~ed., vol.~41 of
  {\em Progress in Mathematics}.
\newblock Birkh\"{a}user Boston, Inc., Boston, MA, 1996.

\bibitem{Tancer}
{\sc Tancer, M.}
\newblock Intersection patterns of convex sets via simplicial complexes: a
  survey.
\newblock In {\em Thirty essays on geometric graph theory}. Springer, New York,
  2013, pp.~521--540.

\bibitem{MR1838920}
{\sc Terai, N.}
\newblock The {E}isenbud-{G}oto inequality in {S}tanley-{R}eisner rings.
\newblock No.~1175. 2000, pp.~107--122.
\newblock Algebraic combinatorics on convex polytopes (Japanese) (Kyoto, 2000).

\bibitem{MR2563591}
{\sc Van~Tuyl, A.}
\newblock Sequentially {C}ohen-{M}acaulay bipartite graphs: vertex
  decomposability and regularity.
\newblock {\em Arch. Math. (Basel) 93}, 5 (2009), 451--459.

\bibitem{Weg75}
{\sc Wegner, G.}
\newblock {$d$}-collapsing and nerves of families of convex sets.
\newblock {\em Arch. Math. (Basel) 26\/} (1975), 317--321.

\end{thebibliography}

\end{document}